\colorlet{darkishRed}{red!80!black}
\colorlet{darkishBlue}{blue!60!black}
\colorlet{darkishGreen}{green!60!black}
\newcommand{\nao}[1][]{%
\ifthenelse{\equal{#1}{}}{\trianglelefteq_T}{\trianglelefteq_{T_{#1}}\!}%
}
\def\Dv{\lowbkwd D{0.3}1}
\def\Tv{\lowbkwd T{0.3}1}
\def\Rv{\lowbkwd R{0.3}1}
\def\lowfwd #1#2#3{{\mathop{\kern0pt #1}\limits^{\kern#2pt\raise.#3ex
\vbox to 0pt{\hbox{$\scriptscriptstyle\rightarrow$}\vss}}}}
\def\lowbkwd #1#2#3{{\mathop{\kern0pt #1}\limits^{\kern#2pt\raise.#3ex
\vbox to 0pt{\hbox{$\scriptscriptstyle\leftarrow$}\vss}}}}
\def\ve{\kern-1.5pt\lowfwd e{1.5}2\kern-1pt}
\def\ev{\kern-1pt\lowbkwd e{0.5}2\kern-1pt}
\def\vf{\kern-2pt\lowfwd f{2.5}2\kern-1pt}
\newcommand{ \N } { \mathbb{N} }
\newcommand{ \Z } { \mathbb{Z} }
\def\calCommandfactory#1{%
   \expandafter\def\csname c#1\endcsname{\mathcal{#1}}}
\def\frakCommandfactory#1{%
   \expandafter\def\csname frak#1\endcsname{\mathfrak{#1}}}
\newcounter{ctr}
  \edef\X{\@Alph\c@ctr}
  \edef\Y{\@alph\c@ctr}
\newcommand{\dc}[1]{\lceil #1\rceil}
\newcommand{\uc}[1]{\lfloor #1\rfloor}
\newtheorem{theorem}{Theorem}[section]
\newtheorem{mainresult}{Theorem}
\newtheorem{corollary}[theorem]{Corollary}
\newtheorem{lemma}[theorem]{Lemma}
\newtheorem{example}[theorem]{Example}
\newenvironment{customthm}[1]
  {\innercustomthm}
  {\endinnercustomthm}
\theoremstyle{definition}
\theoremstyle{remark}
\title[Hamiltonicity in infinite tournaments]{\Large Hamiltonicity in infinite tournaments}
\author{Ruben Melcher}
\address{University of Hamburg, Department of Mathematics, Bundesstraße 55 (Geomatikum), 20146 Hamburg, Germany}
\email{ruben.melcher@uni-hamburg.de}
\keywords{infinite graph; infinite digraphs; end; limit edge; end space; depth-first search tree, arborescence; normal tree}
\subjclass[2020]{05C63, 05C20, 05C05, 05C45, 05C38, 05C85, 68R10}
\begin{document}

\begin{abstract}

We prove that for all countable tournaments $D$ the recently discovered compactification $|D|$ by their ends and limit edges contains a topological Hamilton path: a topological arc that contains every vertex. If $D$ is strongly connected, then $|D|$ contains a topological Hamilton circle. 

These results extend well-known theorems about finite tournaments, which we show do not extend to the infinite in a purely combinatorial setting.
\end{abstract}
\vspace*{-1cm}
\maketitle

\vspace*{-.7cm}
\section{Introduction}
\noindent

\noindent A natural aim in infinite graph theory is to extend known theorems about finite graphs to infinite graphs. However the right way to do this is not always to apply the finite statement to infinite graphs verbatim: it often fails for trivial reasons, or becomes trivially true. A fruitful attempt to overcome this issue is to not only consider the graph itself, but the graph together with `points at infinity': its ends.

Formally, an \emph{end} of a graph is an equivalence class of its rays, where two rays are equivalent if no finite set of vertices separates them. A graph $G$ together with its ends naturally forms a topological space $|G|$. For locally finite $G$, this is its well-known Freudenthal compactification. The topological properties of the space $|G|$ have been extensively studied~\cite{diestel2006end, diestel2003graph,ApproximatingNormalTrees, polat1996ends, polat1996ends2, sprussel2008end}. Letting topological arcs and circles in $|G|$ take the role of paths and cycles in $G$, it often becomes possible to extend theorems about paths and cycles in finite graphs to infinite graphs. Examples include Euler's theorem~\cite{BergerBruhnEuler,GartsidePitzEuler}, arboricity and \hbox{tree-packing~\cite{DiestelBook3, SteinTreePacking}}, Hamiltonicity~\cite{diestel2015book,AgelosFleischner,GeoTopCircles,BruhnYu,HeuerTwoExtensions,ErdeLehnerPitz,CuiWangYuHamilton}, and various planarity criteria~\cite{duality,BruhnSteinMacLane,DiestelPottDualgraphs}.

A similarly useful notion for ends of digraphs has been found only very recently. 
In a series of three papers~\cite{EndsOfDigraphsI,EndsOfDigraphsII,EndsOfDigraphsIII}, B\"urger and the author introduced a notion of ends in digraphs for which the fundamental techniques of undirected end space theory naturally generalise to  digraphs. Unlike for undirected graphs, some ends of digraphs are joined by \emph{limit  edges}. A digraph $D$ together with its ends and limit edges naturally forms a topological space $|D|$. So the scene is set now to attempt, also for digraphs $D$, to extend finite to infinite theorems by letting the  naturally oriented topological paths and circles in $|D|$ take the role of directed paths and  cycles in $D$. The purpose of this paper is to make a start on this programme, with two well-known Hamiltonicity theorems for digraphs.

Two folklore theorems in finite graph theory, due to R\'edei~\cite{Redei} and Camion~\cite{Camion}, respectively, say that every finite tournament has a Hamilton path, and every finite strongly connected tournament has a Hamilton cycle. In this paper we show that these results have natural analogues in the space $|D|$. We shall see that ends and limit edges are both crucial for such extensions to exist: there exists a countable tournament $D$ whose compactification by just the ends of the underlying undirected graph contains no topological Hamilton path. (Similarly, $D$ has no topological Hamilton path in $|D|$ that avoids all its limit edges, and $D$ has no spanning ray or double ray.)

To state our results formally, we need a few definitions. A \emph{ray} is an infinite directed path that has a first vertex (but no last vertex). The  subrays of a  ray are its \emph{tails}. A ray in a digraph $D$ is \emph{solid} in $D$ if it has a tail in some strong component of $D-X$ for every finite vertex set $X\subseteq V(D)$. Two solid rays in  $D$ are \emph{equivalent} if for every finite vertex set $X\subseteq V(D)$ they have a tail in the same strong component of $D-X$. The classes of this equivalence relation are the \emph{ends} of $D$. For an end $\omega$ we write $C(X,\omega)$ for the strong component of $D-X$ in which every ray that represents $\omega$ has a tail. For two ends $\omega$ and $\eta$ of $D$ a finite vertex set $X \subseteq V(D)$ is said to \emph{separate} $\omega$ and $\eta$ if $C(X, \omega) \neq C(X, \eta )$. For two distinct ends $\omega$ and $\eta$ of $D$ we call the pair  $(\omega,\eta)$ a \emph{limit edge} of $D$ from $\omega$ to $\eta$ if $D$ has an edge from $C(X,\omega)$ to $C(X,\eta)$ for every finite vertex set $X \subseteq V(D)$ that separates $\omega$ and $\eta$. For a vertex $v\in V(D)$ and an end $\omega$ we call the pair $(v,\omega)$ a \emph{limit edge} of $D$ from $v$ to $\omega$ if $D$ has an edge from $v$ to $C(X,\omega)$ for every finite vertex set $X\subseteq V(D)$ with $v \not\in  C(X, \omega)$. Similarly, we call the pair $(\omega,v)$ a \emph{limit edge} of $D$ from $\omega$ to $v$ if $D$ has an edge from $C(X,\omega)$ to $v$ for every finite vertex set $X\subseteq V(D)$ with $v \not\in C(X, \omega)$. For example if $R$ is a ray and every vertex of $R$ sends an edge to a vertex $v$, then there is a limit $(\omega,v)$ from the end $\omega$ that is represented by $R$ to $v$.

The topological space $|D|$ has as its ground set the digraph $D$, viewed as a  \hbox{1-complex}, together with the ends and limit edges of $D$. The topology on $|D|$ will be defined formally in Section~\ref{section: Preliminaries}.

A \emph{topological path in $|D|$} is a continuous map $\alpha \colon [0,1] \to |D|$ that respects the direction of the edges of $D$ when it traverses them. For example, a ray that represents an end $\omega$ naturally defines a topological path from its first vertex to $\omega$, and might be extended  by a limit edge that starts at $\omega$. A  \emph{Hamilton path in}  $|D|$ is an injective topological path  in $|D|$ that traverses every vertex exactly once. We remark that, as every end of $D$ is a limit point of  vertices of $D$, any topological path that traverses all the vertices of $D$ also traverses all its ends.

There are two trivial obstacles for $|D|$ to containing a  Hamilton path. The first is that the  cardinality of $D$ may be larger than the cardinality of the unit interval. For this reason we will only consider countable digraphs: these can have continuum many ends, but no more. 
Another potential obstruction to the existence of a Hamilton path in $|D|$ is that the space $|D|$ may not be compact. As any continuous image of $[0,1]$ is compact, it is not hard to show that $|D|$ is compact as soon as a topological path traverses all the vertices of $D$. (We shall disallow parallel edges, as these do not affect hamiltonicity.) For this reason we will only consider those digraphs for which $|D|$ is a compactification of $D$. These can be described combinatorially, as fallows.

A digraph $D$ is called \emph{solid} if $D-X$ has only finitely many strong components for all finite vertex sets $X \subseteq V(D)$. As shown in~\cite[Theorem~1]{EndsOfDigraphsII}, a digraph $D$ is solid if and only if $|D|$ is compact.  A vertex $v$ \emph{can reach} a vertex $w$ in $D$ if there is a (finite) path in $D$ from $v$ to $w$. Our first main theorem reads as follows:

\begin{customthm}{1}\label{introTheoremOne}
Every countable solid tournament has a  Hamilton path. This  Hamilton path may be chosen so as to start at any vertex that can reach every other vertex.
\end{customthm}

For finite tournaments $D$ there is a standard proof of this result: given a directed path $P$ in $D$, a quick case distinction shows that any vertex not yet contained in $P$ can be inserted into $P$. 
This proof strategy does not adapt easily to infinite tournaments $D$. It is still possible to insert vertices one after the other to obtain a sequence of longer and longer finite paths which, eventually, contain all the vertices of $D$. But even if we can show that these paths converge to a topological path in $|D|$ that contains all its vertices, this need not be a  Hamilton path by our definition: it might visit some ends multiple times, and thus fail to be injective.

But there is another proof for the finite case, which, as we shall see, can be adapted to infinite digraphs. Every finite tournament $D$ contains a vertex $r$ that can reach every other vertex of $D$. Let $T$ be a tree obtained by a depth-first search starting at $r$; note that $r$ can reach every vertex of $D$ even in $T$. Now $T$ imposes a partial ordering $\leq_T$ on $V:=V(D)=V(T)$ defined by letting $v \leq_T w$ if $v$ lies on the path in $T$ from $r$ to $w$. As $T$ is a depth-first search tree, this order $\leq_T$ has a linear extension $\leq$ on $V$ in which $u \geq v$ for $\leq_T$-incomparable vertices $u,v$ if our search found $u$ before $v$. (Note that this differs from when $u$ and $v$ are $\leq_T$-comparable: in that case we have $u \leq_T v$, and hence $u \leq v$, if our search found $u$ before $v$.)

This is indeed a total order; it is known as the \emph{reverse post-order} and widely used in computer science. Crucially for us one can show that, if $v$ is the predecessor of $w$ in $\leq$, the unique edge of $D$ between them is directed from $v$ to $w$~\cite{cormen2009book}. Clearly, therefore, this total order on $V$ defines a directed Hamilton path in $D$. Let us see now how the above proof adapts to infinite digraphs in our topological setting.

In the third paper of the series~\cite{EndsOfDigraphsIII}, depth-first search trees were adapted to infinite digraphs; these infinite analogues are called \emph{normal} arborescences. (An \emph{arborescence}, in any digraph, is an oriented rooted tree in which the root can reach every vertex.) The notion of normality will be introduced in Section~\ref{section: Preliminaries}. For now we only need that normal spanning arborescences exist in every countable solid tournament; they define a  tree-order on the vertices as in finite digraphs, and this ordering extends to a total order on its vertices and ends in which $(x,y)$ is an (oriented) edge or limit edge whenever $x$ is the predecessor of $y$. To prove Theorem~\ref{introTheoremOne} it then only remains to show that this ordering is continuous at ends, and thus defines a Hamilton path in $|D|$ as before.

To illustrate Theorem~\ref{introTheoremOne}, let us look at an example. Let $D$ be a  solid tournament in which the infinite binary tree $T$ is a normal spanning arborescence. Then $D$ has a Hamilton path $\alpha$ which traverses every tree-edge from a vertex $v$ to its right child $v|1$, and every limit edge from an end $\omega_{v}$ represented by a ray $v|1000\ldots$ to the vertex $v|0$; see Figure~\ref{fig: backtracking}. This $\alpha$ can be viewed as a limit of the Hamilton paths discussed earlier of the finite subtournaments $D_n$ of $D$ spanned by the  subtrees $T_n$ of height $n$ in $T$, which are depth-first search trees of $D_n$.\begin{figure}[ht]
    \centering
    \begin{tikzpicture}[scale=0.72]
    \begin{scope}[xscale=-1,yscale=1]
    \tikzset{edge/.style = {->,> = latex'}}
    
     \draw[fill,black] (0,-2.5) circle (.05);
     
     \draw[edge, thick] (0,-2.5) to (-2.8,-1.5);
     \draw[edge, thick] (0,-2.5) to (2.8,-1.5);

     \draw[fill,black] (-2.8,-1.5) circle (.05);
     \node at (-3.1,-1.6) {$v$};
     \draw[fill,black] (2.8,-1.5) circle (.05);
     \node (R) at (2.8,-1.5) {};
     
     \draw[edge, thick] (-2.8,-1.5) to (-4.1,0);
     \draw[edge, thick] (-2.8,-1.5) to (-1.5,0);
     \node at (-1.1,-0.2) {$v|0$};
     \node at (-4.5,-0.2) {$v|1$};
     
     \node at (-3.6,2.25) {$\omega_v$};
     
     \draw[edge, thick] (2.8,-1.5) to (4.1,0);
     \draw[edge, thick] (2.8,-1.5) to (1.5,0);

\begin{scope}[shift={(-4.1,0)}]
     \draw[fill,black] (0,0) circle (.05);
     
      \draw[edge, thick,rotate around={25:(0,0)}] (0,0) to (0,1.5);
      \draw[edge, thick,rotate around={-25:(0,0)}] (0,0) to (0,1.5);
      
      \node at (115:1.6) {$\cdot$};
      \node at (115:1.75) {$\cdot$};
      \node at (115:1.9) {$\cdot$};
      
      \node at (65:1.6) {$\cdot$};
      \node at (65:1.75) {$\cdot$};
      \node at (65:1.9) {$\cdot$};
      
      
      \draw[fill,black] (115:2.15) circle (.05);
      \node (LLL) at  (115:2.15) {};
      \draw[fill,black] (65:2.15) circle (.05);
      \node (LLR) at  (65:2.15) {};
      
\end{scope}

\begin{scope}[shift={(-1.5,0)}]
     \draw[fill,black] (0,0) circle (.05);
     \node (LR) at (0,0) {};
     
      \draw[edge, thick,rotate around={25:(0,0)}] (0,0) to (0,1.5);
      \draw[edge, thick,rotate around={-25:(0,0)}] (0,0) to (0,1.5);
      
      \node at (115:1.6) {$\cdot$};
      \node at (115:1.75) {$\cdot$};
      \node at (115:1.9) {$\cdot$};
      
      \node at (65:1.6) {$\cdot$};
      \node at (65:1.75) {$\cdot$};
      \node at (65:1.9) {$\cdot$};
      
      
      \draw[fill,black] (115:2.15) circle (.05);
      \draw[fill,black] (65:2.15) circle (.05);
      \node (LRR) at (65:2.15) {};
\end{scope}

\begin{scope}[shift={(1.5,0)}]
     \draw[fill,black] (0,0) circle (.05);
     
      \draw[edge, thick,rotate around={25:(0,0)}] (0,0) to (0,1.5);
      \draw[edge, thick,rotate around={-25:(0,0)}] (0,0) to (0,1.5);
      
      \node at (115:1.6) {$\cdot$};
      \node at (115:1.75) {$\cdot$};
      \node at (115:1.9) {$\cdot$};
      
      \node at (65:1.6) {$\cdot$};
      \node at (65:1.75) {$\cdot$};
      \node at (65:1.9) {$\cdot$};
      
      
      \draw[fill,black] (115:2.15) circle (.05);
      \draw[fill,black] (65:2.15) circle (.05);
      \node (RLR) at (65:2.15) {};
\end{scope}    

\begin{scope}[shift={(4.1,0)}]
     \draw[fill,black] (0,0) circle (.05);
     \node (RR) at (0,0) {};
     
      \draw[edge, thick,rotate around={25:(0,0)}] (0,0) to (0,1.5);
      \draw[edge, thick,rotate around={-25:(0,0)}] (0,0) to (0,1.5);
      
      \node at (115:1.6) {$\cdot$};
      \node at (115:1.75) {$\cdot$};
      \node at (115:1.9) {$\cdot$};
      
      \node at (65:1.6) {$\cdot$};
      \node at (65:1.75) {$\cdot$};
      \node at (65:1.9) {$\cdot$};
      
      
      \draw[fill,black] (115:2.15) circle (.05);
       \draw[fill,black] (65:2.15) circle (.05);
\end{scope}

    \draw[edge, thick, dashed] (-3.2,1.95) to[bend right] (-1.5,0);
    \draw[edge, thick, dashed] (-0.6,1.95) to[bend right] (2.8,-1.5);
    \draw[edge, thick, dashed] (2.4,1.95) to[bend right] (4.1,0);
    
   \draw[line width=2.5, gray,opacity=0.6] (-3.2,1.95) to[bend right] (-1.5,0);
    \draw[line width=2.5, gray,opacity=0.6] (-0.6,1.95) to[bend right] (2.8,-1.5);
    \draw[line width=2.5, gray,opacity=0.6] (2.4,1.95) to[bend right] (4.1,0);
    
    \draw[edge, line width=3, gray,opacity=0.6] (0,-2.5) to (-2.8,-1.5) to (-4.1,0 ) to (-4.75,1.4);
    
     \draw[edge, line width=3, gray,opacity=0.6]  (-1.5,0 ) to (-2.15,1.4);
    
     \draw[edge, line width=3, gray,opacity=0.6]  (2.8,-1.5) to (1.5,0 ) to (0.85,1.4);
 
     \draw[edge, line width=3, gray,opacity=0.6]  (4.1,0 ) to (3.45,1.4);


\node (A) at  (-5,1.95) {};

\node (B) at  (-4,0.8) {};

\node (C) at  (-4.4,1.95) {};

\node (D) at  (-3.75,1.325) {};

\node (F) at  (-3.85,1.95) {};

\node (G) at  (-3.55,1.75)  {};

\node (H) at  (-3.45,1.95)  {};

\node (I) at  (-3.2,1.95)  {};




\draw[line width=2,  gray, opacity=0.7] plot [smooth, tension=.9] coordinates{(A)  (B)  (C) (D) (F) (G) (H) (I)};

\begin{scope}[shift={(2.6,0)}]
\node (A) at  (-5,1.95) {};

\node (B) at  (-4,0.8) {};

\node (C) at  (-4.4,1.95) {};

\node (D) at  (-3.75,1.325) {};

\node (F) at  (-3.85,1.95) {};

\node (G) at  (-3.55,1.75)  {};

\node (H) at  (-3.45,1.95)  {};

\node (I) at  (-3.2,1.95)  {};




\draw[line width=2,  gray, opacity=0.7] plot [smooth, tension=.9] coordinates{(A)  (B)  (C) (D) (F) (G) (H) (I)};

\end{scope}

\begin{scope}[shift={(5.6,0)}]
\node (A) at  (-5,1.95) {};

\node (B) at  (-4,0.8) {};

\node (C) at  (-4.4,1.95) {};

\node (D) at  (-3.75,1.325) {};

\node (F) at  (-3.85,1.95) {};

\node (G) at  (-3.55,1.75)  {};

\node (H) at  (-3.45,1.95)  {};

\node (I) at  (-3.2,1.95)  {};




\draw[line width=2,  gray, opacity=0.7] plot [smooth, tension=.9] coordinates{(A)  (B)  (C) (D) (F) (G) (H) (I)};

\end{scope}

\begin{scope}[shift={(8.2,0)}]
\node (A) at  (-5,1.95) {};

\node (B) at  (-4,0.8) {};

\node (C) at  (-4.4,1.95) {};

\node (D) at  (-3.75,1.325) {};

\node (F) at  (-3.85,1.95) {};

\node (G) at  (-3.55,1.75)  {};

\node (H) at  (-3.45,1.95)  {};

\node (I) at  (-3.2,1.95)  {};




\draw[line width=2,  gray, opacity=0.7] plot [smooth, tension=.9] coordinates{(A)  (B)  (C) (D) (F) (G) (H) (I)};

\end{scope}
    \end{scope}
   \end{tikzpicture}
\caption{A tournament with the infinite binary tree as a normal spanning arborescence. All edges between $\leq_T$-incomparable vertices  run from right to left. 
}\label{fig: backtracking}
\end{figure}
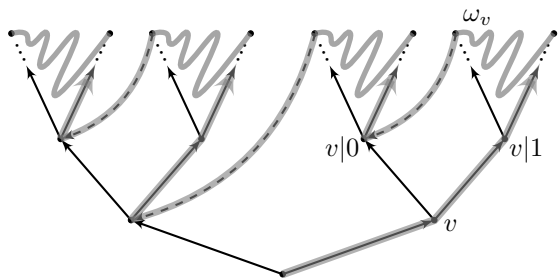

A topological path $\alpha$ in $|D|$  is \emph{closed} if $\alpha(0)=\alpha(1)$.  A \emph{Hamilton circle} of $D$ is a closed but otherwise injective topological path in $|D|$ that traverses every vertex. As remarked earlier, this implies that it also traverses every end (exactly once).

\begin{customthm}{2}\label{introTheoremTwo}
Every countable strongly connected solid tournament has a  Hamilton  circle. 
\end{customthm}

As shown in~\cite[Theorem~4,~Lemma~5.1]{EndsOfDigraphsII},  a countable solid digraph $D$ is strongly connected if and only if for any two points $x,y \in |D|$ there is a topological path in $|D|$ from $x$ to $y$.  Again, there is a standard proof of Theorem~\ref{mainthm: Camion} for finite tournaments $D$: a quick case distinction shows that any vertex not yet contained in a given cycle can be inserted. Again, for infinite $D$ it is possible to insert new vertices, one after the other, into a cycle to obtain at the limit a  closed topological path containing all the vertices. But, as earlier, this topological path might traverse ends multiple times.

We will instead use Theorem~\ref{introTheoremOne} to prove  Theorem~\ref{introTheoremTwo}. Ideally, we would like to fix a  Hamilton path $\alpha$ in $|D|$ and then use an edge or a limit edge from the endpoint of $\alpha$ to its starting point  to obtain a Hamilton circle in $|D|$. This is not always possible, as we are only free to choose the starting point of $\alpha$. However, an analysis of $D$ and its ends will give us enough control over the endpoint of $\alpha$ to construct the desired Hamilton circle.

This paper can be read without reading the  series~\cite{EndsOfDigraphsI,EndsOfDigraphsII,EndsOfDigraphsIII} about ends of digraphs first. We only need a few terms and results from this series, which we collect in Section~\ref{section: Preliminaries}. We will then prove Theorem~\ref{mainthm: Redei} in Section~\ref{section: Rédei's theorem}, and Theorem~\ref{mainthm: Camion} in Section~\ref{section: Camion's theorem}.

\section{Preliminaries}\label{section: Preliminaries}

\noindent For graph-theoretic terms we follow the terminology in \cite{diestel2015book}. Throughout this paper, $D$ is an infinite digraph without infinitely many parallel edges and without loops. We write $V(D)$ for its vertex set,  $E(D)$ for its edge set, $\Omega(D)$ for its set of ends and $\Lambda(D)$ for its set of limit edges. For a finite vertex set $X$ and an end $\omega$ of $D$ we write $C(X, \omega)$ for the strong component of $D-X$ that contains a tail of every ray that represents $\omega$. We write $\Omega(X,\omega)$ for the set of  ends which are represented by a ray in $C(X,\omega)$.

In the following, we give a concise definition of the space $|D|$ and its topology. For a detailed introduction of the space $|D|$ and its topology, see~\hbox{\cite[Section~2~and~3]{EndsOfDigraphsII}}. The ground set of $|D|$ is obtained by taking $V(D) \cup \Omega(D)$ together with a copy of the unit interval $[0,1]_e$ for every edge or limit edge $e$ of $D$. Then we identify every vertex or end $x$ with the copy of 0 in $[0,1]_e$ for which $x$ is the tail of $e$ and with the copy of 1 in $[0,1]_f$ for which $x$ is the head of $f$, for all $e,f \in E(D) \cup \Lambda(D)$. Basic open sets of vertices $v$ are uniform  stars of radius $\varepsilon$ around $v$, i.e. an  $\varepsilon$ length from every edge or limit edge that is adjacent to $v$. Basic open sets of inner points of edges $e$ are open subintervals of $[1,0]_e$ containing it. For ends $\omega$, basic open sets $\hat{C}_\varepsilon(X,\omega)$ are the union of $C(X,\omega)$ together with $\Omega(X,\omega)$ and every limit edge which has both its endpoints in $C(X,\omega) \cup \Omega(X,\omega)$ and an $\varepsilon$ length of every edge or limit edge which has precisely one endpoint in $C(X, \omega) \cup \Omega(X,\omega)$, for finite vertex sets $X \subseteq V(D)$. For inner points $z$ of limit edges $(\omega, \eta)$, basic open sets $\hat{E}_{\varepsilon ,z} (X,(\omega,\eta))$ are the union of $\varepsilon$ intervals around the copy of $z$ in every edge between $C(X,\omega) \cup \Omega (X,\omega)$ and $C(X,\eta) \cup \Omega (X,\eta)$, for finite vertex sets $X$ which separate $\omega$ and $\eta$. Similarly, for inner points $z$ of limit edges $(v,\omega)$, basic open sets $\hat{E}_{\varepsilon ,z} (X,(v,\omega))$ are the union of $\varepsilon$ intervals around the copy of $z$ in every edge or limit edge between $v$ and $C(X,\omega) \cup \Omega(X,\omega)$, for finite vertex sets $X$ which contain~$v$. Basic open sets $\hat{E}_{\varepsilon ,z} (X,(\omega,v))$ are defined analogously.

\begin{figure}[ht]
    \centering
    \begin{tikzpicture}[scale=0.8]
    \tikzset{edge/.style = {->,> = latex'}}


\draw (-0.8 , 0.5) -- (0.8 ,0.5) -- (0.8 ,-0.3) -- (-0.8 ,-0.3) -- (-0.8,0.5);

\begin{scope}[shift={(-2.1,1.2)}]
\draw (-1.3,3) to[out=-90, in=180] (0,0);
\draw (1.3,3) to[out=-90, in=0] (0,0);
\end{scope}

\begin{scope}[shift={(2.1,1.2)}]
\draw (-1.3,3) to[out=-90, in=180] (0,0);
\draw (1.3,3) to[out=-90, in=0] (0,0);
\end{scope}


 \draw[fill,black] (-1.3,4.2) circle (.05);
\node at (-1.5,4.4) {$\omega$};

 \draw[fill,black] (1.3,4.2) circle (.05);
\node at (1.5,4.4) {$\eta$};

\draw[edge,thick,dashed] (-1.3,4.2) to[bend left] (1.3,4.2);

    \draw[fill,black] (-1.3,2.2) circle (.05);
    \draw[fill,black] (-1.3,2.75) circle (.05);
    \draw[fill,black] (-1.3,3.2) circle (.05);
    
    \draw[fill,black] (1.3,2.2) circle (.05);
    \draw[fill,black] (1.3,2.75) circle (.05);
    \draw[fill,black] (1.3,3.2) circle (.05);
    
    \draw[edge,thick] (-1.3,2.2) to[bend left] (1.3,2.2);
    \draw[edge,thick] (-1.3,2.75) to[bend left] (1.3,2.75);
    \draw[edge,thick] (-1.3,3.2) to[bend left] (1.3,3.2);
    
    \node at (0,4.23) {$\vdots$};

 \draw[fill,black] (3.1,4.2) circle (.05);
\node at (2.85,4.4) {$\omega'$};

\draw[fill,black] (0.6,0.3) circle (.05);

 \draw[fill,black] (2,2.2) circle (.05);
    \draw[fill,black] (2.4,2.75) circle (.05);
    \draw[fill,black] (2.8,3.2) circle (.05);
    
\draw[edge,thick]    (0.6,0.3) to[out=30, in=-105 ]  (2,2.2);

\draw [edge,thick] plot [smooth, tension=.8] coordinates {  (0.6,0.3) (1.85,1)  (2.4,2.75)};


\draw [edge,thick] plot [smooth, tension=.8] coordinates {  (0.6,0.3) (2.25,1) (2.8,3.2)};

\draw [edge,thick,dashed] plot [smooth, tension=.8] coordinates {  (0.6,0.3)  (2.6,.9)  (3.1,4.2)};


\draw[fill,black] (-0.6,0.3) circle (.05);
\draw[edge,thick] (-0.6,0.3) to[bend right] (0.6,0.3);


\draw[fill,black] (-1.8,1.8) circle (.05);
\draw[fill,black] (-2.4,1.8) circle (.05);

\draw[edge,thick] (-0.6,0.3) to[bend left] (-1.8,1.8);
\draw[edge,thick] (-0.6,0.3) to[bend left] (-2.4,1.8);


\draw[fill,black] (-2.2,4.2) circle (.05);
\draw[fill,black] (-3.1,4.2) circle (.05);

\draw[edge,thick,dashed] (-3.1,4.2) to[bend left] (-2.2,4.2);


 \draw[fill,black] (-2.9,2.2) circle (.05);
  \draw[fill,black] (-2.9,2.75) circle (.05);
  \draw[fill,black] (-2.9,3.2) circle (.05);

\draw[edge,thick] (-4,2.2) to[bend left] (-2.9,2.2);
\draw[edge,thick] (-4,2.75) to[bend left] (-2.9,2.75);
\draw[edge,thick] (-4,3.2) to[bend left] (-2.9,3.2);

\draw[edge,thick,dashed] (-4,4.2) to[bend left] (-3.1,4.2);



\begin{scope}[shift={(-2.1,1.2)}]
\fill[fill=gray, fill opacity=0.2] (-1.5,3.5) to[out=-90, in=180] (0,-.2) to[out=0, in=-90](1.5,3.5) to (-1.5,3.5);
\end{scope}

\draw[double distance = 5,line cap=round,color=gray, opacity=0.3] (-.2,4.58) to[out=13, in=167] (.2,4.58);

\draw[double distance = 5,line cap=round,color=gray, opacity=0.3] (-.2,3.58) to[out=13, in=167] (.2,3.58);

\draw[double distance = 5,line cap=round,color=gray, opacity=0.3] (-.2,3.12) to[out=13, in=167] (.2,3.12);

\draw[double distance = 5,line cap=round,color=gray, opacity=0.3] (-.2,2.58) to[out=13, in=167] (.2,2.58);


\node at (-1.1,-.3) {$X$};

\node at (0.6,0) {\footnotesize{$v$}};

\node at (-4,5.2) {\footnotesize{$\hat{C}_\varepsilon(X,\omega) $ }}  ;

\node at (.5,5.2) {\footnotesize{$\hat{E}_{\varepsilon,z}(X,(\omega,\eta)) $ }}  ;


    \begin{scope}[rotate around={-130:(2.5,.8)} , shift={(2.5,.8)}]
    \draw[double distance = 5,line cap=round,color=gray, opacity=0.3] (-.2,0) to[out=13, in=167] (.2,0);
    \end{scope}
    
    \begin{scope}[rotate around={-138:(2.05,.83)} , shift={(2.05,.83)}]
    \draw[double distance = 5,line cap=round,color=gray, opacity=0.3] (-.2,0) to[out=13, in=167] (.2,0);
    \end{scope}
    
    \begin{scope}[rotate around={-140:(1.65,.83)} , shift={(1.65,.83)}]
    \draw[double distance = 5,line cap=round,color=gray, opacity=0.3] (-.2,0) to[out=13, in=167] (.2,0);
    \end{scope}
    
    \begin{scope}[rotate around={-138:(1.3,.85)} , shift={(1.3,.85)}]
    \draw[double distance = 5,line cap=round,color=gray, opacity=0.3] (-.2,0) to[out=13, in=167] (.2,0);
    \end{scope}
    
    \node at (4.3,1) {\footnotesize{$\hat{E}_{\varepsilon,z}(X,(v,\omega')) $} }  ;

    \end{tikzpicture}
\caption{Basic open sets for ends and inner points of limit edges.}\label{fig:opensets}
\end{figure}
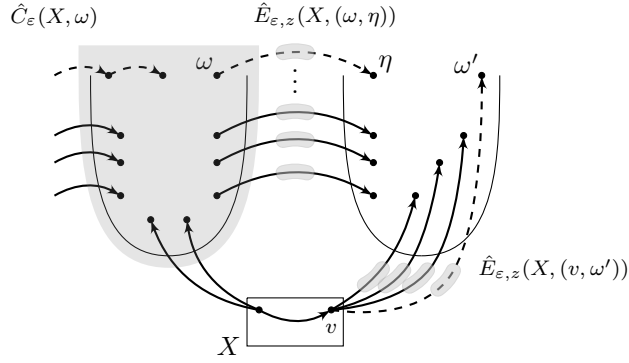

We will find the desired Hamilton path in Theorem~\ref{introTheoremOne} via an inverse limit construction. As shown in~\cite[Section~4]{EndsOfDigraphsII} for solid $D$, the space $|D|$ is the inverse limit of its finite contraction minors. In the following, we will give a concise recap of this result for countable digraphs. For the general definition of an inverse system and its inverse limit, see~\cite[Chapter~8]{diestel2015book} and for their topological properties, see~\cite{RibesZalesskii}.
Let $D$ be a countable digraph, fix any enumeration of its vertex set and write $X_n$ for the set of the first $n$ vertices. We denote by $P_n$ the  partition of $V(D)$ where each vertex in $X_n$ is a singleton partition class and the other partition classes consist of the strong components of $D-X_n$. Every such partition $P_n$ gives rise to a finite (multi-)digraph $D/P_n$ by contracting each partition class and replacing the edges running from a partition class to another by a single edge whenever there are infinitely many.  Formally, declare $P_n$ to be the vertex set of $D/P_n$.  Given distinct partition classes $p_1,p_2 \in P_n$, we define an edge $(e,p_1,p_2)$ of $D/P_n$ for every edge $e \in  E(D)$ from $p_1 $ to $p_2$ if there are  finitely  many  such  edges. If  there  are  infinitely  many  edges  from $p_1$ to $p_2$ we  just  define  a  single  edge  $(p_1p_2,p_1,p_2)$. We  call  the  latter  type  of  edges \emph{quotient edges}. Endowing $D/P_n$ with the 1-complex topology turns it into a compact Hausdorff space, i.e. basic open sets are uniform $\varepsilon$ stars around vertices and open subintervals  of edges. For $m \leq n$  there is a  map  $f_{n, m}$ from $V(D /P_n)$ to  $V( D /P_{m})$, mapping every vertex of $D /P_n$ to the vertex of $D /P_{m}$ containing it. This map extends naturally to a continuous map from $D / P_n$ to $D / P_{m}$ by mapping edges of $D/ P_n$ to vertices or edges of $D/ P_m$ according to the images of its endpoints. This gives an inverse system and if $D$ is solid, its inverse limit coincides with $|D|$:

\begin{corollary}[{\cite[Corollary~4.4]{EndsOfDigraphsII}}]\label{cor: modD inverse limit of deletion minors}
Let $D$ be a countable solid digraph and let  $X_n$ consist of the first $n$ vertices of $D$ with regard to any fixed  enumeration of $V(D)$. Then $|D|~\cong~\varprojlim(D/ P_{n})_{n \in \N } $.
\end{corollary}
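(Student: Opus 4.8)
The plan is to realise $|D|$ as the inverse limit by writing down the natural comparison map and proving it is a homeomorphism through a compactness argument. For each $n$ I would first define a projection $\pi_n \colon |D| \to D/P_n$: a vertex $v$ is sent to the class of $P_n$ containing it (the singleton $\{v\}$ if $v \in X_n$, otherwise the strong component of $D-X_n$ through $v$); an end $\omega$ is sent to the vertex $C(X_n,\omega)$ of $D/P_n$; and an inner point of an edge or limit edge is sent to the corresponding point of the edge of $D/P_n$ onto which that (limit) edge is mapped (a vertex if both endpoints fall into one class, a genuine edge $(e,p_1,p_2)$ or a quotient edge otherwise). The substantive point here is that each $\pi_n$ is continuous. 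At vertices and at inner points of edges this is routine; the work lies in continuity at an end $\omega$ and at inner points of limit edges, where one matches a basic neighbourhood $\hat{C}_\varepsilon(X,\omega)$ (respectively $\hat{E}_{\varepsilon,z}$) against a star (respectively a subinterval) in $D/P_n$, using that $C(X_n,\omega) \subseteq C(X,\omega)$ as soon as $X \subseteq X_n$. I would also record that each $\pi_n$ is surjective, since every class of $P_n$ is $\pi_n$ of any of its vertices and every quotient or genuine edge of $D/P_n$ is covered by the edges of $D$ it arises from.

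Next I would verify the compatibility $f_{n,m} \circ \pi_n = \pi_m$ for $m \leq n$, which is immediate from the definitions because $f_{n,m}$ sends a $P_n$-class to the $P_m$-class containing it. By the universal property of the inverse limit this yields a single continuous map $\pi \colon |D| \to \varprojlim(D/P_n)$, $x \mapsto (\pi_n(x))_n$.

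It then remains only to show that $\pi$ is a bijection: for then $|D|$ is compact because $D$ is solid, the inverse limit is Hausdorff as a subspace of a product of compact Hausdorff spaces, and a continuous bijection from a compact space to a Hausdorff space is a homeomorphism. For injectivity I would check that any two distinct points of $|D|$ are separated by some $\pi_n$: two distinct vertices once $X_n$ contains both; a vertex and an end, or a point on a (limit) edge and any other point, likewise once $X_n$ is large; and, crucially, two distinct ends $\omega \neq \eta$ by choosing $X_n \supseteq X$ for a finite set $X$ with $C(X,\omega)\neq C(X,\eta)$, whence $C(X_n,\omega)$ and $C(X_n,\eta)$ are disjoint and hence distinct in $D/P_n$. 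For surjectivity I would avoid reconstructing an end from a nested thread of components by hand and instead observe that $\pi(|D|)$ is compact, hence closed in the Hausdorff inverse limit, and dense: any nonempty basic open subset of $\varprojlim(D/P_n)$ is cut out by a nonempty open set $O \subseteq D/P_N$ for some $N$, and surjectivity of $\pi_N$ produces a point of $|D|$ sent by $\pi_N$ into $O$, hence by $\pi$ into that basic open set. A closed dense subset is everything, so $\pi$ is onto.

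I expect the main obstacle to be the continuity of the maps $\pi_n$ at the ends and at the inner points of limit edges. This is exactly where the combinatorial definition of the separators $X_n$ coming from the fixed enumeration must be reconciled with the topological neighbourhood bases $\hat{C}_\varepsilon$ and $\hat{E}_{\varepsilon,z}$ of $|D|$, and where the two kinds of edges of $D/P_n$, genuine and quotient, have to be handled separately. Once continuity of the $\pi_n$ is established, the rest is a clean compact-to-Hausdorff argument together with the short separation analysis above.
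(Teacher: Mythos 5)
This corollary is not actually proved in the paper at hand: it is imported verbatim from \cite[Corollary~4.4]{EndsOfDigraphsII}, so the only meaningful comparison is with the argument one would give there, and your proof is correct and is essentially that standard argument: continuous, compatible projections $\pi_n\colon |D|\to D/P_n$, the induced map $\pi$ into the inverse limit, injectivity by separating any two points with $\pi_n$ for large $n$, and surjectivity plus the homeomorphism property via compactness of $|D|$ (which is where solidity enters) against the Hausdorff inverse limit. The one point you should spell out is the well-definedness of $\pi_n$ on a limit edge $(\omega,\eta)$ when $X_n$ separates $\omega$ and $\eta$: your parenthetical allows the image to be ``a genuine edge $(e,p_1,p_2)$ or a quotient edge'', but for a limit edge there is no canonical choice among finitely many parallel edges, so you must check that in this situation there are necessarily infinitely many $D$-edges from $C(X_n,\omega)$ to $C(X_n,\eta)$ and hence a unique quotient edge to map onto; this does follow from the defining property of limit edges, since finitely many such edges would have all their endpoints contained in some larger $X_m$, leaving no edge from $C(X_m,\omega)$ to $C(X_m,\eta)$ even though $X_m$ still separates the two ends.
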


Finally, let us recap the notion of normal arborescences from~\cite{EndsOfDigraphsIII}. An \emph{arborescence} is a rooted oriented tree $T$ that contains for every vertex \mbox{$v \in V(T)$} a directed path from the root to $v$.  The vertices of any arborescence are partially ordered as $v \leq_T w$ if $T$ contains a directed path from $v$ to $w$. We write $\uc{v}_T$ for the up-closure and $\dc{v}_T$ for the down-closure of $v$ in $T$.  Consider a  digraph $D$ and a spanning arborescence $T\subseteq D$. The \emph{normal assistant} of $T$ in $D$ is the auxiliary digraph $H$ that is obtained from $T$ by adding an edge $(v,w)$ for every two $\le_T$-incomparable vertices $v,w \in V(T)$ for which there is an edge from $\uc{v}_T$ to $\uc{w}_T$ in $D$, regardless of whether $D$ contains such an edge. The spanning arborescence $T$ is \emph{normal} in $D$ if the normal assistant of $T$ in $D$ is acyclic; in this case, the transitive closure of the normal assistant defines a partial order $\nao$ on the vertices of $D$ and we call $\nao$ the \emph{normal order} of $T$. We remark that if $D$ is a finite spanning arborescence it is normal in $D$ if and only if it defines a depth-first search tree, see~\cite[Corollary~3.3]{EndsOfDigraphsIII}. One of the most useful properties of normal arborescences is that they capture the separation properties of their host graph~\cite[Lemma~3.4]{EndsOfDigraphsIII}:

\begin{lemma}\label{lemma: separation props of NSA}
Let $D$ be any digraph and let $T\subseteq D$ be a normal spanning arborescence in $D$. If $v,w\in V(T)$ are $\le_T$-incomparable vertices of $T$ with $w\ntrianglelefteq_T  v$, then every path from $w$ to $v$ in $D$ meets $X:=\dc{v}_T\cap \dc{w}_T$. In particular, $X$ separates $v$ and $w$ in $D$. 
\end{lemma}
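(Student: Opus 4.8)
The plan is to prove the contrapositive: assuming a directed path $P=u_0\cdots u_k$ from $w=u_0$ to $v=u_k$ that avoids $X$, I will deduce $w\trianglelefteq_T v$, contradicting the hypothesis $w\ntrianglelefteq_T v$. First I would record two reductions. Writing $m$ for the $\le_T$-meet of $v$ and $w$, the common down-closure is the chain $X=\dc{v}_T\cap\dc{w}_T=\dc{m}_T$, so ``avoiding $X$'' means precisely that no $u_i$ is an ancestor of $m$; in particular each $u_i$ lies in one of the components of the forest $T-X$, these being the subtrees $\uc{c}_T$ whose root $c$ is a child of some vertex of the chain $X$, and $w,v$ lie in the \emph{distinct} components $\uc{c_w}_T,\uc{c_v}_T$ rooted at the two children $c_w,c_v$ of $m$ below $w$ and $v$. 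Second, the defining property of the normal assistant provides the directed replacement for the undirected fact that ``every edge joins comparable vertices'': any edge of $D$ whose endpoints lie in two distinct components of $T-X$ joins $\le_T$-incomparable vertices (if $a\in\uc{c}_T$ and $b\in\uc{c'}_T$ with $c\neq c'$ were comparable, one of the two disjoint subtrees would contain the other), hence is recorded in the normal assistant and yields $a\trianglelefteq_T b$.

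The core of the argument is an extremal analysis in the style of the classical normal-tree separation lemma, but carried out in the normal order $\trianglelefteq_T$ rather than in $\le_T$. Let $t=u_i$ be a $\trianglelefteq_T$-minimal vertex of $P$. Classifying the two edges of $P$ at an interior $t$ by the position of their other endpoint relative to $t$ in $T$, minimality forces the incoming edge $(u_{i-1},t)$ to be a \emph{back edge}, i.e.\ $t<_T u_{i-1}$: a cross edge or a forward/tree edge into $t$ would give $u_{i-1}\trianglelefteq_T t$ with $u_{i-1}\neq t$, contradicting minimality. Symmetrically, the outgoing edge $(t,u_{i+1})$ must be forward or cross, giving $t\trianglelefteq_T u_{i+1}$. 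I would then show that such an interior minimum cannot occur, so that the $\trianglelefteq_T$-minimum of $P$ is an endpoint; since $w$ and $v$ are the only endpoints and every edge leaving the minimum runs forward in $\trianglelefteq_T$, chaining the resulting relations along $P$ (invoking the second reduction at each crossing between components) produces the desired relation $w\trianglelefteq_T v$.

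The main obstacle is exactly the back-edge case isolated above: a directed path is free to descend in the tree, and a single descent followed by a crossing can connect $w$ to $v$ while witnessing only relations of the form $c\trianglelefteq_T v$ with $c<_T w$, which do \emph{not} chain up to $w\trianglelefteq_T v$. This is where normality has to be used in an essential way, and it is the reason the hypothesis cannot be weakened to an arbitrary digraph together with an arbitrary acyclic-assistant tree without further structure. I expect to rule out an interior $\trianglelefteq_T$-minimum by combining the forced back edge into $t$, the forced forward/cross edge out of $t$, and the edge of $D$ between $u_{i-1}$ and $u_{i+1}$: the acyclicity of the normal assistant should turn this local configuration into a directed cycle of the assistant, contradicting the choice of $t$ (equivalently, the normality of $T$). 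Making this last step precise—keeping track of which subtree $\uc{c}_T$ each $u_j$ occupies, and verifying that the manufactured relations genuinely close up into a cycle of the normal assistant rather than merely a cycle in $D$—is the step I expect to be the most delicate, and it is the point at which the assumption that $T$ is \emph{normal} is consumed.
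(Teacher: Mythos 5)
The paper never proves this lemma at all --- it is imported verbatim from \cite{EndsOfDigraphsIII} (Lemma~3.4 there) --- so your proposal can only be measured against the statement itself, and there it breaks down. Your two reductions and your classification of the edges at a $\trianglelefteq_T$-minimal vertex are correct, but the pivotal step you defer, that an \emph{interior} $\trianglelefteq_T$-minimum cannot occur, is not merely delicate: it is false. Take $V(D)=\{r,a,a',b\}$, let $T$ consist of the edges $(r,a),(a,a'),(r,b)$, and let $D$ be $T$ together with the back edge $(a',a)$ and the cross edge $(a,b)$. The only assistant edge besides $T$ is $(a,b)$: the back edge joins $\le_T$-comparable vertices, and there is no $D$-edge from $\uc{a'}_T=\{a'\}$ or from $\uc{b}_T=\{b\}$ to anywhere relevant. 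Hence the assistant is acyclic, $T$ is normal, and $a'\ntrianglelefteq_T b$. So $w=a'$, $v=b$ satisfy every hypothesis of the lemma, yet the path $a'\,a\,b$ avoids $X=\{r\}$; its $\trianglelefteq_T$-minimum is the \emph{interior} vertex $a$, entered by a back edge and left by a cross edge exactly as your analysis predicts, and no assistant cycle exists anywhere, so the contradiction you hope to manufacture cannot exist. This example shows more: with the definitions exactly as reproduced in this paper, the lemma is false for general digraphs, so no proof strategy can close your gap at this level of generality. (The statement becomes true if one strengthens the hypothesis $w\ntrianglelefteq_T v$ to ``$v\trianglelefteq_T w$ and $v\neq w$''; the two are equivalent precisely when the normal order is total, e.g.\ for tournaments, which is the only case this paper ever uses.) Note also that your appeal to ``the edge of $D$ between $u_{i-1}$ and $u_{i+1}$'' silently assumes $D$ is a tournament, which the lemma does not grant.

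Even in the tournament case, where the statement is true, the extremal plan is unfixable, because interior minima still occur there: take the arborescence $r\to c\to x\to w$, $r\to v$ and orient the remaining six pairs as $(w,c),(c,v),(x,v),(w,v),(r,x),(r,w)$; then $T$ is normal, the path $w\,c\,v$ avoids $X=\{r\}$, and its $\trianglelefteq_T$-minimum is the interior vertex $c$. What actually makes the lemma work is a global mechanism your outline never reaches: project the path onto the \emph{roots} of the components of $T-X$ rather than onto its own vertices. If an edge of $P$ runs from a component $\uc{c}_T$ to a different component $\uc{c'}_T$, then that very edge is a $D$-edge from $\uc{c}_T$ to $\uc{c'}_T$ and $c,c'$ are $\le_T$-incomparable, so $(c,c')$ is an assistant edge; assistant relations are inherited downwards in this way, and --- this is the point --- descending along back edges inside a component never changes the component's root. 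Chaining these relations along $P$ yields $c_w\trianglelefteq_T c_v$ for the roots $c_w\neq c_v$ of the components containing $w$ and $v$. Now inspect the tournament edge between $v$ and $w$: if it were $(v,w)$, it would be a $D$-edge from $\uc{c_v}_T$ to $\uc{c_w}_T$, giving the assistant edge $(c_v,c_w)$ and hence, together with $c_w\trianglelefteq_T c_v$, a contradiction to antisymmetry; so the edge is $(w,v)$, and $w\trianglelefteq_T v$ follows at once. This finish uses no analysis of where the path's minimum lies --- indeed, in the tournament example above the conclusion $w\trianglelefteq_T v$ holds even though the minimum is interior, which is exactly why your chaining argument, which can only start at a minimum sitting at an endpoint, produces nothing there.
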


Not every digraph has a normal spanning arborescence. However, as a direct consequence of \cite[Theorem~3]{EndsOfDigraphsIII}, we have that all countable digraphs have one:


\begin{lemma}\label{lem: countabledigraphs have NSA}
Let $D$ be a countable digraph and $r \in V(D)$ a vertex that can reach every other vertex in $D$. Then $D$ has a normal spanning arborescence rooted in $r$.
\end{lemma}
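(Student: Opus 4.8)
The plan is to obtain the normal spanning arborescence as an increasing union $T=\bigcup_{n} T_n$ of finite normal sub-arborescences of $D$, all rooted at $r$. Fix an enumeration $v_0=r,v_1,v_2,\dots$ of $V(D)$, let $T_0$ be the trivial arborescence on $\{r\}$, and build the $T_n$ recursively so that $T_n$ is a down-closed sub-arborescence of $T_{n+1}$ and $\bigcup_n V(T_n)=V(D)$. (Here \emph{down-closed} means that whenever $x\in V(T_n)$ then $\dc{x}_{T}\subseteq V(T_n)$; this holds automatically, since in a tree the unique path from the root $r$ to a vertex $x\in V(T_n)$ already lies in the connected rooted subtree $T_n$.) This reduces the lemma to two tasks: an \emph{extension step}, producing from a finite normal $T_n$ that does not yet span a larger finite normal $T_{n+1}\supseteq T_n$ in a way that guarantees every vertex is eventually absorbed; and a \emph{limit step}, verifying that the union of the $T_n$ is again normal.

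I would carry out the limit step first, as it is the easier of the two and pins down what the extension step has to preserve. Since each $T_n$ is a sub-arborescence of $T$ rooted at $r$, the root reaches every vertex of $T$, so $T$ is a spanning arborescence. To see that $T$ is normal, suppose for contradiction that its normal assistant contains a directed cycle $w_0,w_1,\dots,w_{k-1},w_0$. For each $i$ the consecutive vertices $w_i,w_{i+1}$ are $\le_T$-incomparable and $D$ has an edge $e_i$ from some $a_i\in\uc{w_i}_T$ to some $b_i\in\uc{w_{i+1}}_T$. All of the finitely many vertices $w_i,a_i,b_i$ lie in a common $T_n$. Because $T_n$ is down-closed and $a_i\in V(T_n)$ with $w_i\le_T a_i$, the $T$-path from $w_i$ to $a_i$ lies in $T_n$, so $a_i\in\uc{w_i}_{T_n}$; likewise $b_i\in\uc{w_{i+1}}_{T_n}$, and $w_i,w_{i+1}$ remain incomparable in $T_n$. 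Hence $e_i$ witnesses an assistant edge from $w_i$ to $w_{i+1}$ already in $T_n$, so the whole cycle lies in the normal assistant of $T_n$ --- contradicting the normality of $T_n$. Therefore $T$ is normal.

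The extension step is the main obstacle, and here I would follow the template of Jung's construction of undirected normal spanning trees, using the separation behaviour of normal arborescences recorded in Lemma~\ref{lemma: separation props of NSA}. Given a finite normal $T_n$ with $S:=V(D)\setminus V(T_n)\neq\emptyset$, I would aim to absorb the least-indexed vertex $t\in S$. Since $r$ can reach $t$, every path from $r$ to $t$ crosses from $V(T_n)$ into $S$, so $S$ receives an edge from $V(T_n)$; let $C$ be the strong component of $D-V(T_n)$ meeting the relevant path, and let $u$ be a $\nao$-maximal vertex of $T_n$ sending an edge into $C$. Using strong connectivity of $C$, choose a finite directed path from $u$ into $C$ that reaches as far towards $t$ as possible and is internally disjoint from $V(T_n)$, and let $T_{n+1}$ be $T_n$ together with this path, attached at $u$. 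The new vertices then form a chain strictly above $u$ in the tree order.

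The delicate point --- and the part I expect to require real work --- is verifying that $T_{n+1}$ is again normal, i.e.\ that attaching the chain above the $\nao$-maximal in-neighbour $u$ creates no directed cycle in the normal assistant. This is exactly the infinite analogue of the finite fact, recalled above, that a spanning arborescence is normal precisely when it is a depth-first search tree: the maximal choice of $u$ should force every new ``cross'' edge between the attached chain and the old tree to run backwards with respect to the existing normal order, just as a backtracking depth-first search does, and Lemma~\ref{lemma: separation props of NSA} is the tool that turns this order information into the required acyclicity. Finally, fairness must be argued: because we always target the least-indexed vertex of $S$ and each step adds at least one new vertex while reachability from the current tree is preserved, a routine bookkeeping argument shows that every $v_k$ is absorbed after finitely many steps, so $\bigcup_n V(T_n)=V(D)$ as required. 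Together with the limit step this yields the desired normal spanning arborescence rooted at $r$.
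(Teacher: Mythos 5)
The paper offers no proof of this lemma at all: it is imported wholesale as a direct consequence of Theorem~3 of \emph{Ends of digraphs III}, so your proposal is in effect an attempt to reprove that external theorem from scratch. Your limit step is essentially sound: each $T_n$, being a sub-arborescence of $T$ rooted at $r$, is automatically down-closed, its tree order and up-closures are the restrictions of those of $T$, and any directed cycle of the normal assistant of $T$, together with finitely many witnessing edges, lies in some $T_n$ and hence is already a cycle in the assistant of $T_n$. (One slip: a cycle in the assistant may also use tree edges, whose endpoints are comparable, not only edges between incomparable pairs; but a tree edge of $T$ between two vertices of a down-closed $T_n$ is a tree edge of $T_n$, so this is harmless.)

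The genuine gap is the extension step, which you explicitly defer as ``real work'' --- and which, in the form your reduction needs it, is actually \emph{false}: a finite normal arborescence in $D$ need not extend to a larger normal one absorbing a prescribed vertex. Take $D$ with vertices $r,u,w,c$ and edges $(r,u),(r,w),(w,u),(u,c),(w,c),(c,w)$. The arborescence $T_1$ with edges $(r,u),(r,w)$ is normal (its assistant adds only the edge $(w,u)$), but $c$ can only be attached at $u$ or at $w$, and either attachment destroys normality: attaching at $u$ puts $c$ into $\uc{u}_{T_2}$, so the edge $(c,w)$ witnesses the assistant edge $(u,w)$ while $(w,u)$ witnesses $(w,u)$, a directed $2$-cycle; attaching at $w$ is symmetric, with $(u,c)$ witnessing $(u,w)$. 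So no normal extension of $T_1$ contains $c$, although $D$ \emph{does} have a normal spanning arborescence rooted at $r$, namely the path $r\to u\to c\to w$. Thus normality of the stages is too weak an invariant --- a greedy construction can paint itself into a corner --- and the entire content of the lemma is to design the extension choices, and a correspondingly stronger induction hypothesis, so that this provably never happens. Your prescription does not supply this: since $D$ is a general countable digraph (not a tournament), the normal order is only partial, so ``the'' maximal vertex of $T_n$ sending an edge into $C$ need not be unique; ``reaches as far towards $t$ as possible'' is undefined and $t$ need not lie in $C$ at all, so absorption of $t$ is not guaranteed; and, as the example shows, maximality of the attachment point does not prevent edges running both ways between the new chain and the up-closure of an old incomparable vertex, which is exactly what creates assistant cycles. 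As written, the proposal therefore reduces the lemma to an unproven claim of essentially the same difficulty as the cited Theorem~3 --- and, in the generality stated, to a false one.
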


If $T$ is a normal spanning arborescence of a solid digraph $D$, then every ray of $T$ is solid in $D$ and therefore represents an end of $D$. By Lemma~\ref{lemma: separation props of NSA}, any two distinct rays in $T$ that start at the root represent distinct ends of $D$. Conversely, it is shown in~\cite[Theorem~1]{EndsOfDigraphsIII} that any end of $D$ is represented by a ray in $T$:

\begin{lemma}\label{lem: NST endfaith}
Let $D$ be any digraph and $T$ a normal spanning arborescence of $D$. Then for every end of $D$ there is exactly one ray in $T$ that represents the end in $D$ and starts at the root of $T$.
\end{lemma}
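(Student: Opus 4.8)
The plan is to prove the two halves of the statement---uniqueness and existence of a representing root-ray---separately, with the separation property of Lemma~\ref{lemma: separation props of NSA} doing the main work in both.

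For \emph{uniqueness}, suppose two distinct rays $R_1\neq R_2$ in $T$ both start at the root $r$ and both represent the same end $\omega$. Let $b$ be the last vertex they share and let $v_1,v_2$ be the successors of $b$ on $R_1,R_2$; these are $\le_T$-incomparable children of $b$, and the tail of $R_i$ past $v_i$ lies in $\uc{v_i}_T$. Put $X:=\dc{b}_T$. Since the two branches above $b$ are disjoint, $\dc{u_1}_T\cap\dc{u_2}_T=\dc{b}_T=X$ for all $u_1\ge_T v_1$ and $u_2\ge_T v_2$. As both rays represent $\omega$, each $R_i$ has a tail inside $C(X,\omega)$ for this finite $X$; pick $u_1$ on $R_1$ and $u_2$ on $R_2$ in that common strong component, so $u_1,u_2$ are mutually reachable in $D-X$. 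But $u_1\neq u_2$ are $\le_T$-incomparable, so at least one of $u_1\ntrianglelefteq_T u_2$, $u_2\ntrianglelefteq_T u_1$ holds; Lemma~\ref{lemma: separation props of NSA} then forces every path between them in the corresponding direction to meet $\dc{u_1}_T\cap\dc{u_2}_T=X$, contradicting that such a path exists in $D-X$. Hence no two root-rays represent the same end.

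For \emph{existence}, fix an end $\omega$ represented by a solid ray $W$. I would first record the structural fact underlying everything: for a finite \emph{down-closed} set $Y\subseteq V(T)$, every strong component of $D-Y$ lies in a single component $\uc{c}_T$ of $T-Y$. Indeed, if a strong component met two such subtrees $\uc{c}_T,\uc{c'}_T$ in vertices $u,u'$, these would be $\le_T$-incomparable with meet $\le_T$ the parent of $c$, hence $\dc{u}_T\cap\dc{u'}_T\subseteq Y$, and Lemma~\ref{lemma: separation props of NSA} would separate $u,u'$ in $D-Y$. Using this I build the ray $R=x_0x_1x_2\cdots$ greedily: set $x_0:=r$ and $X_k:=\dc{x_k}_T$; given $x_k$, the set $C(X_k,\omega)$ lies (by the structural fact, and by induction inside $\uc{x_k}_T\setminus\{x_k\}=\bigsqcup_{c}\uc{c}_T$ over the children $c$ of $x_k$) in a single subtree $\uc{c}_T$, and I let $x_{k+1}:=c$. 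Thus $R$ follows, level by level, the branch of $T$ carrying the components $C(X_k,\omega)$.

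It then remains to verify that $R$ really represents $\omega$, i.e.\ that for every finite $X$ a tail of $R$ lies in $C(X,\omega)$; since passing to the down-closure $\bar X$ only shrinks components ($C(\bar X,\omega)\subseteq C(X,\omega)$), it suffices to treat down-closed $X=\bar X$. If $x_N$ is the first vertex of $R$ outside $\bar X$, then $\dc{x_{N-1}}_T\subseteq\bar X$ and the greedy choice give $C(\bar X,\omega)\subseteq C(\dc{x_{N-1}}_T,\omega)\subseteq\uc{x_N}_T$, so $R$ heads into the correct subtree. The crux---and the step I expect to be the main obstacle---is to upgrade this to $x_m\in C(\bar X,\omega)$ for all large $m$. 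This is exactly where the directed setting departs from the theory of undirected normal spanning trees: there every edge joins $\le_T$-comparable vertices, so $C(\bar X,\omega)$ is the whole subtree $\uc{x_N}_T$ and contains the spine automatically; here edges between $\le_T$-incomparable vertices are allowed, $C(\bar X,\omega)$ may be a proper subset of $\uc{x_N}_T$, and the spine vertex $x_N$ need not lie in it (already a single ray $t_0\to t_1\to\cdots$ with back-edges only from $t_{i+1}$ to $t_i$ for $i\ge 2$ has $x_1=t_1\notin C(\{t_0\},\omega)$, although the phenomenon there is transient). To overcome this I would combine two ingredients: solidity of $D$, which makes $D-\bar X$ have only finitely many strong components, so that the edges $x_m\to x_{m+1}$ force the components $S_m\ni x_m$ to be non-decreasing in the component order and hence eventually constant, equal to a single $S$ containing the whole tail of $R$; and the orientation constraint enforced by normality---an edge between $\le_T$-incomparable vertices $v,w$ must satisfy $w\trianglelefteq_T v$, since otherwise the one-edge path $v\to w$ would violate Lemma~\ref{lemma: separation props of NSA}---which, together with the strong connectivity of $C(\bar X,\omega)$, I would use to produce a path from $C(\bar X,\omega)$ back down onto the spine and thereby identify $S$ with $C(\bar X,\omega)$. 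Once $x_m\in C(\bar X,\omega)$ for all large $m$, the tail of $R$ lies in $C(\bar X,\omega)\subseteq C(X,\omega)$, so $R$ is solid and equivalent to $W$; hence $R$ represents $\omega$, completing existence.
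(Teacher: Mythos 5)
Your uniqueness half is correct and complete, and it is exactly the argument this paper has in mind: the paper does not prove Lemma~\ref{lem: NST endfaith} at all but imports it from \cite[Theorem~1]{EndsOfDigraphsIII}, remarking only that distinctness of the represented ends follows from Lemma~\ref{lemma: separation props of NSA} --- which is precisely your argument. The existence half is where the substance lies, and there your proposal has a genuine gap, sitting exactly at the step you yourself flag as the crux. The greedy construction of $R$, the structural fact about down-closed sets, and the reduction to down-closed $X$ are all fine; what is missing is the proof that a tail of $R$ lies in $C(\bar X,\omega)$, and the plan you offer does not close it. Two concrete problems: first, you invoke solidity of $D$ (finitely many strong components of $D-\bar X$), which is not a hypothesis --- the lemma is stated for an arbitrary digraph, and your stabilisation argument for the components $S_m$ genuinely needs that finiteness. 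Second, your ``orientation constraint'' is stated backwards: if $D$ has an edge from $v$ to $w$ with $v,w$ being $\le_T$-incomparable, then that edge is itself an edge from $\uc{v}_T$ to $\uc{w}_T$, so the normal assistant contains $(v,w)$ and hence $v\trianglelefteq_T w$, not $w\trianglelefteq_T v$ (equivalently: apply Lemma~\ref{lemma: separation props of NSA} to the one-edge path). Such edges therefore go \emph{up} in the normal order, so they can never by themselves ``produce a path from $C(\bar X,\omega)$ back down onto the spine''; that production is the entire difficulty, and your sketch leaves it undone.

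Here is a way to close the gap with tools you already have, and without solidity. Write $C:=C(\bar X,\omega)$, let $x_N$ be the first spine vertex outside $\bar X$, so $C\subseteq\uc{x_N}_T$, and recall from your greedy property that $C$ meets $\uc{x_{k+1}}_T$ for every $k\ge N$, since $C$ contains $C(\bar X\cup\dc{x_k}_T,\omega)$. Suppose for contradiction that no $x_j$ with $j\ge N$ lies in $C$. Pick $u\in C$, choose $k\ge N$ with $u\notin\uc{x_{k+1}}_T$ (possible, as $\bigcap_m\uc{x_m}_T=\emptyset$), and pick $u'\in C\cap\uc{x_{k+1}}_T$. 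One checks that $u$ and $u'$ are $\le_T$-incomparable and that $\dc{u}_T\cap\dc{u'}_T\subseteq\bar X\cup\{x_N,\dots,x_k\}$, a set disjoint from $C$ by assumption. Since $C$ is a strong component of $D-\bar X$, there are paths from $u$ to $u'$ and from $u'$ to $u$ inside $C$, hence avoiding $\dc{u}_T\cap\dc{u'}_T$; Lemma~\ref{lemma: separation props of NSA} applied in both directions then forces $u\trianglelefteq_T u'$ and $u'\trianglelefteq_T u$, contradicting antisymmetry of the normal order. So some $x_j$ with $j\ge N$ lies in $C$, and then for every $m\ge j$ the closed walk consisting of the tree-path $x_j\cdots x_m$, the tree-path from $x_m$ up to a vertex of $C\cap\uc{x_{m+1}}_T$, and a path inside $C$ back to $x_j$ lies in $D-\bar X$, so $x_m\in C$ as well. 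This puts a tail of $R$ in $C$ and finishes existence in the stated generality; both of your ingredients, solidity and the orientation constraint, get replaced by antisymmetry of the normal order.
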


\section{Hamilton paths}\label{section: Rédei's theorem}

\noindent In this section we prove Theorem~\ref{introTheoremOne} and give an example which shows that ends and limit edges are crucial for such an extension to exist. As mentioned in the introduction we will find the desired Hamilton path of Theorem~\ref{introTheoremOne} alongside a normal spanning arborescence of the tournament. This makes it possible to prove a slightly stronger statement and we will need this strengthening in our proof of Theorem~\ref{mainthm: Camion}. For a tournament $D$ with a normal spanning arborescence $T$, we say that an injective topological path $\alpha$ in $|D|$ \emph{respects the normal order} of $T$ if $\alpha$ traverses a vertex $t$ before a vertex $t'$ if and only if $t$ is less than $t'$ in the normal order of $T$. Note that the normal order of $T$ is a total order, as $D$ is a tournament.

\begin{mainresult}\label{mainthm: Redei}
Let $D$ be a countable solid tournament with a normal spanning arborescence $T$. Then $D$ has a Hamilton path that respects the normal order of~$T$.
\end{mainresult}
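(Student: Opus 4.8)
The plan is to convert the normal order of $T$ into a linear order on the vertices \emph{and} ends of $D$, and then to realise that order as a topological Hamilton path through the inverse limit presentation of $|D|$ provided by Corollary~\ref{cor: modD inverse limit of deletion minors}.

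First I would record that the normal order $\trianglelefteq_T$ is a \emph{total} order on $V(D)$: any two vertices $v,w$ are joined by an edge of $D$ since $D$ is a tournament, so if they are $\le_T$-incomparable then the normal assistant of $T$ contains exactly one of $(v,w),(w,v)$; as the normal assistant is acyclic, its transitive closure $\trianglelefteq_T$ is linear, with minimum the root $r$ of $T$ (which becomes the starting vertex, giving the addendum of Theorem~\ref{introTheoremOne} once $T$ is rooted at a prescribed vertex via Lemma~\ref{lem: countabledigraphs have NSA}). Next I would extend $\trianglelefteq_T$ to a total order $\preceq$ on $V(D)\cup\Omega(D)$ by inserting each end at a cut determined by its ray. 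By Lemma~\ref{lem: NST endfaith} every end $\omega$ is represented by a unique ray $R_\omega=v_0v_1\cdots$ of $T$ starting at $r$; all vertices of $R_\omega$ are declared $\prec\omega$, and for a vertex $u$ off $R_\omega$ I let $v_k$ be the last vertex of $R_\omega$ with $v_k\le_T u$ and set $u\prec\omega$ if $u\trianglelefteq_T v_{k+1}$ and $\omega\prec u$ otherwise, comparing two ends by the same rule at the branch vertex of their rays. That $\preceq$ is a well-defined total order in which distinct ends receive distinct cuts follows from Lemma~\ref{lemma: separation props of NSA} together with the end-faithfulness of $T$.

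I would then verify that $\preceq$ is \emph{realisable}: whenever $x$ is the immediate $\preceq$-predecessor of $y$, the pair $(x,y)$ is an edge or a limit edge of $D$ directed from $x$ to $y$. For two vertices this is the infinite analogue of the reverse-post-order fact for finite depth-first search trees, and I would prove it directly from Lemma~\ref{lemma: separation props of NSA}: were the edge between consecutive vertices $v\prec w$ directed from $w$ to $v$, the separator $\dc{v}_T\cap\dc{w}_T$ would force a vertex strictly $\preceq$-between $v$ and $w$, contradicting consecutiveness. The remaining cases $(v,\omega)$, $(\omega,v)$ and $(\omega,\eta)$ are read off from the definition of a limit edge: for each finite separator $X$ one applies the vertex–vertex case to the $\preceq$-extreme vertices of the relevant component $C(X,\cdot)$, producing for every $X$ an edge that witnesses the required limit edge.

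For continuity I would fix an enumeration of $V(D)$ whose initial segments $X_n$ are all down-closed in the \emph{tree} order $\le_T$, and show that $\preceq$ induces on the vertex set $P_n$ of each finite contraction minor $D/P_n$ a total order $\le_n$ whose consecutive classes are joined by a forward edge of $D/P_n$, so that $\le_n$ is a Hamilton path $H_n$ of $D/P_n$. The essential point — and what I expect to be the main obstacle — is that each class of $P_n$, i.e.\ each strong component of $D-X_n$ together with the ends it captures, is \emph{convex} in $\preceq$; this is precisely the combinatorial content of continuity at the ends, namely that an arc may enter and leave each basic neighbourhood $\hat C_\varepsilon(X_n,\omega)$ only once. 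Unlike in the undirected case, a strong component of $D-X_n$ may fuse several tree-up-sets $\uc{c}_T$ through cross edges, so convexity is not immediate from the tree structure and must be extracted from Lemma~\ref{lemma: separation props of NSA} (with solidity guaranteeing that only finitely many classes need to be ordered at each stage). Granting convexity, every $P_m$-class is a $\le_n$-interval for $m\le n$, so contracting these intervals turns $\le_n$ into $\le_m$ and the bonding map $f_{n,m}$ carries $H_n$ monotonically onto $H_m$; thus $(H_n)_{n\in\N}$ is a compatible family, and by Corollary~\ref{cor: modD inverse limit of deletion minors} its inverse limit is a topological path $\alpha$ in $|D|$ traversing each class of each $D/P_n$ exactly once. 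Since every end $\omega$ is the unique point lying in $C(X_n,\omega)$ for all $n$, and $H_n$ meets that class once, $\alpha$ traverses every vertex and every end exactly once and respects the normal order of $T$, which is the assertion of Theorem~\ref{mainthm: Redei}.
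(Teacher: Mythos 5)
Your strategy is, at bottom, the same as the paper's: run the construction over a chain of $\le_T$-down-closed finite sets $X_n$, observe that the normal order gives a Hamilton path in each finite contraction minor $D/P_n$ (a tree-edge between comparable consecutive classes, a forward cross-edge between incomparable ones, since $D$ is a tournament), and obtain the Hamilton path of $|D|$ as an inverse limit via Corollary~\ref{cor: modD inverse limit of deletion minors}. The preliminary layer you build --- the linear order $\preceq$ on $V(D)\cup\Omega(D)$ and its ``realisability'' by limit edges --- is scaffolding the paper does without: where the ends sit, and which limit edges get traversed, falls out of the inverse limit topology automatically, so you could delete that part (whose limit-edge case is shaky anyway, as infinite classes need not have $\preceq$-extreme vertices). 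Your convexity claim is indeed the crux, and it is true; it corresponds to the paper's assertions that the contraction $T_n$ of $T$ is a normal spanning arborescence of $D/P_n$ and that the normal order of $T$ induces the normal order of $T_n$. You leave it unproved, but so, in fairness, does the paper in any detail.

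The genuine gap is the step ``thus $(H_n)_{n\in\N}$ is a compatible family, and \ldots{} its inverse limit is a topological path.'' Compatibility of the vertex orders $\le_n$ under the bonding maps is not enough, because the $D/P_n$ are multi-digraphs: two consecutive classes can be joined by several parallel edges of $D/P_n$, and the edge you happen to select for $H_n$ need not be mapped by $f_{n,m}$ to the edge you selected for $H_m$, even though the vertex sequences correspond. Without edge-level compatibility there is no inverse system of paths, hence nothing to take a limit of. The paper closes exactly this hole with K\H{o}nig's infinity lemma: each $D/P_n$ admits only finitely many Hamilton paths with the prescribed vertex sequence, so the edges can be chosen so that $f_{n,m}(W_n)=W_m$. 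A second, more routine omission: to invoke the universal property of $\varprojlim$ you need compatible \emph{parameterisations} $\alpha_n\colon[0,1]\to D/P_n$, and to make the limit map injective and continuous one must choose them nowhere constant except on the contracted classes, with constancy intervals of length at most $\tfrac1n$; injectivity at the ends is precisely what distinguishes a Hamilton path from a mere vertex-covering topological path, so this cannot be waved through. Neither gap is fatal --- both are filled by the paper's arguments --- but as written your passage from the $H_n$ to $\alpha$ is not yet a proof.
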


Note that every solid tournament $D$ has a vertex that can reach every other vertex. Indeed, as $D$ is solid it has only finitely many strong components and one of them sends an edge to every other strong component; every vertex in this component can reach any other vertex in $D$. Since $D$ is countable, it has a normal spanning arborescence $T$ rooted at any given vertex that can reach every other vertex of $D$, Lemma~\ref{lem: countabledigraphs have NSA}. And any Hamilton path that respects the normal order of $T$ starts at the root of $T$, since it is the smallest element. So the above formulation of Theorem~\ref{mainthm: Redei} implies the formulation in the introduction.

\begin{proof}

Our goal is to show that the normal order of $T$ naturally defines a Hamilton path in $|D|$. We will show this by an inverse limit construction. It is straightforward to find a sequence $X_1 \subseteq X_2 \subseteq \ldots $ of finite vertex sets of $V(D)$ such that: 
\begin{enumerate}
    \item the union of all the $X_n$ is $V(D)$ and
    
    \item $X_n$ is down-closed in $T$ with regards to the tree-order.
\end{enumerate}

Now, every $X_n$ defines a partition $P_n$ of $V(D)$ and a finite contraction minor $D/P_n$ of $D$ as in Section~\ref{section: Preliminaries}. These $D/ P_n$ form an inverse system with bonding maps $f_{n,m}$. By the first property (i), the partitions $P_n$ are cofinal in a sequence of partitions that arise by an enumeration of $V(D)$. Hence, the $D/P_n$ form a cofinal (sub-)inverse system of an inverse system that arises by an enumeration of $V(D)$; so the inverse limit of both inverse systems coincides and we have by Corollary~\ref{cor: modD inverse limit of deletion minors} that $|D|~\cong~\varprojlim(D/ P_{n})_{n \in \N }$. Next, we will find compatible Hamilton paths in every $D/P_n$ so that the universal property of the inverse limit gives the desired Hamilton path in $|D|$. By the second property (ii) and Lemma~\ref{lemma: separation props of NSA}, the edges of $T$ in $D/P_n$ form a spanning arborescence $T_n$ of $D/P_n$. Moreover, as $T$ is normal in $D$ we have that $T_n$ is normal in $D/P_n$. As $D$ is a tournament we have that the normal order of $T_n$ is a total order on the vertices of $D/P_n$. Let $v_1 \trianglelefteq_{T_n}  \ldots \trianglelefteq_{T_n}  v_k$ be the sequence of  vertices of  $D / P_n$ ordered by the normal order of $T_n$.\footnote{We remark that this is the reverse post-order of $T_n$.} We claim that $W_n = v_1, \ldots ,v_k$ is a Hamilton path in $D / P_n$. Indeed, either $v_i$ and $v_{i+1}$ are $\leq_{T_n}$-comparable in which case $(v_i,v_{i+1})$ is a tree-edge of $T_n$ or  $v_i$ and $v_{i+1}$ are $\leq_{T_n}$-incomparable, in which case there is a cross-edge from $v_i$ to $v_{i+1}$ as $D$ is a tournament. These Hamilton paths are compatible in the sense that $f_{n,m}(v_1 )\trianglelefteq_{T_m} \ldots \trianglelefteq_{T_m} f_{n,m}(v_k)$ is the sequence of vertices of $W_m$. However, as there might be parallel cross-edges from $v_i$ to $v_{i+1}$, it might happen that $(f_{n,m}(v_i),f_{n,m}(v_{i+1})$ does not coincide with the edge from $f_{n,m}(v_i)$ to $f_{n,m}(v_{i+1})$ in $W_m$. However, for every $D/P_n$ there are only finitely many Hamilton paths with vertex sequence $v_1,\ldots,v_k$. So by  K\H{o}nig's infinity lemma, we might choose the edges of $W_n$ such that $f_{n,m}(W_n)$ gives $W_m$.

Finally, fix for every $n\in \N$ a parameterisation $\alpha_n \colon [0,1] \to D / P_n$ of $W_n$. It is straightforward to choose the $\alpha_n$ in a compatible way, i.e. the projection of $\alpha_n$ coincides with $\alpha_{n-1}$. Moreover, we may choose the $\alpha_n$ so that they are nowhere constant but on the strong components of $D - X_n $, i.e. on the vertices of $D/P_n$ not in $X_n$ and that the intervals in $[0,1]$ on which $\alpha_n$ is constant have length less or equal to $\frac{1}{n}$. Now, the universal property of the inverse limit gives an injective topological path $\alpha$. It traverses every vertex of $D$ as the $X_n$ contain every vertex eventually, so  $\alpha$ is a Hamilton path in $|D|$. To show that $\alpha$ respects the normal order of $T$, consider two vertices $t \trianglelefteq_T t'$ and choose $n \in \N$ so that $t,t' \in X_n$. Then $t \trianglelefteq_{T_n} t'$ and $\alpha_n$ traverses $t$ before $t'$. As the projection of $\alpha$ to $D / P_n$ gives $\alpha_n$, we have that $\alpha$ traverses $t$ before $t'$. Conversely, if $\alpha$ traverses $t$ before $t'$ a we have that $\alpha_n$ traverses $t$ before $t'$ in $D/P_n$ for $n \in \N$ with $t,t' \in X_n$. Hence, we have $t \trianglelefteq_{T_n} t'$ and  as the normal order of $T$ induces the normal order of $T_n$, we have that $t \trianglelefteq_T t'$.
\end{proof}
\newpage

\begin{figure}[h]
    \centering
    \begin{tikzpicture}[scale=0.8]
    \begin{scope}[xscale=-1,yscale=1]
    \tikzset{edge/.style = {->,> = latex'}}
    
    

     
    
    \draw[fill,black] (-3,6.2) circle (.05);
    \draw[fill,black] (0,6.2) circle (.05);
    \draw[fill,black] (3,6.2) circle (.05);
    
    \node at (-2.6,6.4) {$\omega_1$};
    \node at (.4,6.4) {$\omega_2$};
    \node at (3.4,6.4) {$\omega_3$};
    
    
    \node at (-3,5.8) {$\vdots$};
    \node at (0,5.8) {$\vdots$};
    \node at (3,5.8) {$\vdots$};
    
    \node at (-2.7,4.6) {\footnotesize$R_1$};
    \node at (0.3,4.6) {\footnotesize$R_2$};
    \node at (3.3,4.6) {\footnotesize$R_3$};

     \foreach \y in {0,1,2,3} {
     \draw[fill,black] (0,\y+1) circle (.05);
     
     }
     
     \foreach \y in {1,2,...,3}  {
     \draw[edge,thick] (0,\y) to (0,\y+1);
     }

     \draw[edge,thick] (-3,4) to (-3,5.3);
     \draw[edge,thick] (0,4) to (0,5.3);
     \draw[edge,thick] (3,4) to (3,5.3);

     
     \foreach \y in {0,1,2,3} {
     \draw[fill,black] (3,1+\y) circle (.05);
     
     }
     
      \foreach \y in {1,2,...,3}  {
     \draw[edge, thick] (3,\y) to (3,\y+1);
     }
     
     
     \foreach \y in {0,1,2,3} {
     \draw[fill,black] (-3,1+\y) circle (.05);
     
     }
     
      \foreach \y in {1,2,...,3}  {
     \draw[edge, thick] (-3,\y) to (-3,\y+1);
     }

  \foreach \y in {1,2} {
  \draw[edge, gray] (-3,\y+2) to[bend right] (-3,1);
   \draw[edge, gray] (3,\y+2) to[bend left] (3,1);
   }

\draw[edge,thick] (-3,1) to (0,1);
\draw[edge,thick] (0,1) to (3,1);

    \foreach \y in {2,3,4} {
    \draw[edge, gray] (-3,1) to[bend left] (0,\y);
    \draw[edge, gray] (0,1) to[bend left] (3,\y);
    }

       \draw[edge, gray, dashed] (-3, 1 ) to[out=80, in=-150] (0,6.2);
    \draw[edge, gray, dashed] (0, 1 ) to[out=80, in=-150] (3,6.2);
    
    
 \draw[edge, gray, dashed] (-3,6.2) to[bend right] (-3,1);
 \draw[edge, gray, dashed] (3,6.2) to[bend left] (3,1);
 
\end{scope}
 
 \end{tikzpicture}
\caption{A countable solid tournament together with a normal spanning arborescence (black edges). All grey edges along a branch are oriented from top to bottom, and all edges between any two branches are oriented from right to left.}\label{fig: counterexample}
\end{figure}
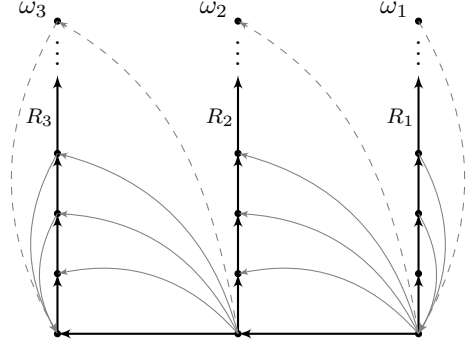

\begin{example}\label{example: redei best possible}
The tournament $D$ in Figure~\ref{fig: counterexample} satisfies the following properties:
\begin{enumerate}
    \item There is no spanning ray or double ray in $D$.
    \item There is no Hamilton path in the space formed by $D$ and its end compactification of the underlying undirected graph.
     \item There is no Hamilton path in $|D|$ that avoids all inner points of limit edges of $D$.
\end{enumerate}
\end{example}
\begin{proof}
To (i): There are no edges into $V(R_1)$ and there are no edges out of $V(R_3)$; hence, any ray or double ray that contains a vertex of $R_1$ and a vertex of $R_3$ contains only finitely many vertices of $R_2$.

To (ii): The underlying undirected graph of $D$ is a clique; hence, it has exactly one end $\omega$. Moreover, its end compactification  by  the ends of the underlying undirected graph coincides with its one-point compactification $D^*$. So the image of any Hamilton path in $D^*$ defines either a spanning (reverse) ray or the disjoint union of a ray and a reverse ray containing together every vertex of $D$. A similar argument as in (i) shows that there are no such (reverse) rays in $D$.

To (iii): Suppose for a contradiction that $D$ has a Hamilton path $\alpha$ that avoids all inner points of limit edges of $D$. Then one of the ends $\omega_i$ is not an endpoint of $\alpha$. So the image of $\alpha$ contains a ray  that represents $\omega_i$ and a reverse ray that contains infinitely many vertices of $R_i$. However, $D$ contains no such ray and reverse ray. 
\end{proof}

\section{Hamilton circles}\label{section: Camion's theorem}

\noindent In this section, we prove Theorem~\ref{mainthm: Camion}. For this we need some definitions and two Lemmas. The reverse subrays of a reverse ray are its \emph{tails}.  A reverse ray $R$ in a digraph $D$ \emph{represents} an end $\omega$ if there is a solid ray $R'$ in $D$ that represents $\omega$ such that $R$ and $R'$ have a tail in the same strong component of $D-X$ for every finite vertex set $X\subseteq V(D)$. It is straightforward to show that a reverse ray $R$ that represents an end $\omega$ defines a topological path from $\omega$ to the first vertex of $R$. For a (reverse) ray $R=v_1,v_2,\ldots$ the subpaths of the form $v_1,\ldots,v_n$ are the \emph{finite initial segments} of $R$. For a double ray $W=\ldots,w_{-1},w_0,w_{1},\ldots$ we denote by $W_{n<}$ the ray $w_{n+1},w_{n+2},\ldots$ and by $W_{<n}$ the reverse ray $\ldots,w_{n-2},w_{n-1}$.

We say that a vertex $v$ of $D$ can be \emph{inserted} into a (reverse) ray $R=v_1,v_2,\ldots$ if there is a path $P$ that starts (ends) at $v$ such that $v_1,\ldots,v_{i-1},P,v_{i},\ldots$ is a (reverse) ray in $D$, for some $i \in \N$. Similarly, we say that a vertex $v$ of $D$ can be \emph{inserted} into a double ray $W= \ldots ,w_{-1},w_0,w_1,\ldots$ if there is a path $P$ that contains $v$ such that $\ldots,w_{i-1},P,w_{i},\ldots$ is a double ray in $D$, for some $i \in \Z$.  A quick case distinction shows:

\begin{lemma}\label{lem: insertin}
Let $D$ be a strongly connected tournament. Any given vertex can be inserted into any given (reverse) ray.  Any given vertex with finite in- or finite  out-degree can be inserted into any given double ray.\qed
\end{lemma}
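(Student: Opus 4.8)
The plan is to prove each assertion by a short case analysis on the orientations of the (unique, since $D$ is a tournament) edges between the given vertex $v$ and the vertices of the given (reverse) ray or double ray, splicing $v$ in at a place where these orientations switch in the required way, and falling back on strong connectedness to route a finite detour through $v$ in the degenerate cases where no such switch is available. Throughout I assume $v$ does not lie on the given (reverse) ray or double ray, since otherwise there is nothing to do. First I would record a symmetry that halves the work: reversing every edge of $D$ again yields a strongly connected tournament, it turns rays into reverse rays (and vice versa) and double rays into double rays, and it exchanges the in- and out-degree of $v$. Hence it suffices to insert $v$ into a ray, and to insert a vertex of finite in-degree into a double ray; the reverse-ray statement and the finite-out-degree statement then follow by applying these to the reversed digraph.

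For a ray $R=v_1,v_2,\ldots$ I would distinguish three cases. If $v \to v_1$, then $v,v_1,v_2,\ldots$ is a ray, so $v$ is inserted (with $P=v$ and $i=1$). Otherwise $v_1 \to v$; let $i\ge 2$ be least with $v\to v_i$. If this $i$ exists then $v_{i-1}\to v$ by minimality, so $v_1,\ldots,v_{i-1},v,v_i,\ldots$ is a ray inserting $v$. The only remaining possibility is that every vertex of $R$ sends an edge to $v$. Here I would invoke strong connectedness to obtain a finite path from $v$ back to $R$: let $v_j$ be the first vertex of $R$ on a shortest path from $v$ to $V(R)$, and let $P$ be the initial segment of that path up to but not including $v_j$. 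Then $v_1,\ldots,v_{j-1},P,v_j,v_{j+1},\ldots$ is a ray, because $v_{j-1}\to v$ (every vertex of $R$ points to $v$), the last edge of $P$ runs into $v_j$, and the internal vertices of $P$ avoid $R$ by the shortest-path choice.

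For a double ray $W=\ldots,w_{-1},w_0,w_1,\ldots$ with $v$ of finite in-degree, the set $S:=\{i\in\Z : w_i\to v\}$ is finite. If $S\neq\emptyset$ I would set $i_0:=\max S$; then $w_{i_0}\to v$ and $v\to w_{i_0+1}$, so inserting the single vertex $v$ between $w_{i_0}$ and $w_{i_0+1}$ yields a double ray. If $S=\emptyset$ then $v\to w_i$ for all $i$, and I would again use strong connectedness: take a shortest path from $V(W)$ to $v$, starting at some $w_j\in V(W)$, and let $P$ be its part strictly after $w_j$ (so $P$ ends at $v$); then $\ldots,w_{j-1},w_j,P,w_{j+1},\ldots$ is a double ray inserting $v$, using the first edge of the path out of $w_j$ together with $v\to w_{j+1}$.

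The step I expect to be the main obstacle --- though still routine --- is exactly the degenerate configuration in which $v$ is dominated by (or, dually, dominates) the whole ray, so that no single-vertex switch exists and one must insert a genuine detour $P$. Strong connectedness guarantees such a $P$, and the point of choosing a shortest path to or from the (double) ray is to keep the internal vertices of $P$ disjoint from it, so that the spliced object is an honest ray or double ray rather than a walk repeating vertices. For double rays this is precisely where the finite-degree hypothesis is needed: it rules out the configuration in which infinitely many vertices point to $v$ and infinitely many are dominated by $v$ with the wrong alternation, where neither a clean switch nor a one-sided detour is available.
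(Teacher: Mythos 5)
Your proof is correct, and it matches the paper's approach: the paper states this lemma with no written proof, merely asserting that ``a quick case distinction shows'' it, and your argument (case analysis on the orientations of the edges between $v$ and the (double) ray, with a strong-connectivity detour in the degenerate all-edges-one-way cases, plus the reversal symmetry) is exactly the intended case distinction, correctly using that the definition of \emph{inserted} permits a nontrivial path $P$ rather than just the vertex $v$.
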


There is a natural partial order on the set of ends of a digraph $D$. For two ends $\omega , \eta \in \Omega(D)$ write $\omega \leq_\Omega  \eta$ if  there are rays $R_\omega$ and $R_\eta$  that  represent $\omega$ and~$\eta$ respectively, such that there are infinitely many  disjoint paths from $R_\omega$ to $R_\eta$. This gives a (well-defined) partial order on $\Omega(D)$. If $(\omega ,\eta)$ is a limit edge of $D$ then clearly $\omega \leq_\Omega \eta$; the converse is false in general. If $D$ is a tournament then any two ends of $D$ are comparable, so $\leq_\Omega$ gives a total order on $\Omega(D)$.

\begin{lemma}\label{lemma: end space tournament greates element}
Let $D$ be any countable solid tournament, then $\Omega(D)$ has a greatest and a least  element. 
\end{lemma}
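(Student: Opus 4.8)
The plan is to reduce the order $\leq_\Omega$ on $\Omega(D)$ to a condensation order on the finite contraction minors of $D$, and then to extract the two extremal ends by an inverse-limit/König's-lemma argument. Fix an increasing sequence $X_1\subseteq X_2\subseteq\cdots$ of finite vertex sets with $\bigcup_n X_n=V(D)$. Since $D$ is a tournament, between any two distinct strong components $C,C'$ of $D-X$ all edges run the same way and at least one edge exists; hence the condensation of $D-X$ is a transitive tournament, and its strong components carry a linear order $\prec$ in which $C\prec C'$ means that all edges between $C$ and $C'$ run from $C$ to $C'$. By solidity this is a linear order on a finite set; moreover $D-X_n$ (being infinite) has at least one infinite strong component, and every infinite strong component of $D-X_n$ is of the form $C(X_n,\omega)$ for some end $\omega$ (it lies on a branch of infinite components, which determines an end via Corollary~\ref{cor: modD inverse limit of deletion minors}).

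The key reduction is: for distinct ends $\omega,\eta$ and any finite $X$ separating them, if $C(X,\omega)\prec C(X,\eta)$ then $\omega\leq_\Omega\eta$. Indeed $C(X_m,\omega)\prec C(X_m,\eta)$ for all large $m$ (the components shrink and keep their orientation), so for each such $m$ every vertex of $C(X_m,\omega)$ sends an edge to every vertex of $C(X_m,\eta)$. Choosing vertex-disjoint rays $R_\omega,R_\eta$ representing $\omega,\eta$ with tails in all these components, one picks for infinitely many $m$ a fresh vertex $a_m$ of $R_\omega$ in $C(X_m,\omega)$ and a fresh vertex $b_m$ of $R_\eta$ in $C(X_m,\eta)$; the single edges $a_m\to b_m$ are then infinitely many pairwise disjoint paths from $R_\omega$ to $R_\eta$, witnessing $\omega\leq_\Omega\eta$.

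Now let $L_n$ be the $\prec$-greatest infinite strong component of $D-X_n$. The heart of the proof is the claim that the component $C'$ of $D-X_{n-1}$ containing $L_n$ is $L_{n-1}$. Suppose instead that some infinite component $A$ of $D-X_{n-1}$ satisfies $C'\prec A$. Then $A=C(X_{n-1},\eta)$ for some end $\eta$, and $B:=C(X_n,\eta)\subseteq A$ is an infinite component of $D-X_n$. Every edge between $L_n\subseteq C'$ and $B\subseteq A$ is an edge between the distinct components $C'$ and $A$, hence runs from $L_n$ to $B$; so $L_n\prec B$, contradicting the maximality of $L_n$. Therefore $L_n\subseteq L_{n-1}$ for all $n$, so $(L_n)_n$ is a coherent choice of infinite components; by Corollary~\ref{cor: modD inverse limit of deletion minors} it is a point of $\varprojlim(D/P_n)$ that is neither a vertex (each $L_n$ is infinite) nor an inner point of a (limit) edge, hence an end $\omega^*$ with $C(X_n,\omega^*)=L_n$ for all $n$.

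Finally, $\omega^*$ is the greatest element of $\leq_\Omega$: given any other end $\eta$, some $X_N$ separates $\eta$ from $\omega^*$, whence $C(X_m,\eta)\neq L_m$ and so $C(X_m,\eta)\prec L_m=C(X_m,\omega^*)$ for all $m\geq N$; the reduction then gives $\eta\leq_\Omega\omega^*$. Running the same argument with the $\prec$-smallest infinite components $S_n$ in place of $L_n$ produces a least element $\omega_*$. I expect the main obstacle to be the monotonicity claim that the parent of the greatest infinite component is again greatest, together with the disjoint-path construction underlying the reduction; the rest is bookkeeping on the inverse system of condensations.
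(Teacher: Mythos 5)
Your proposal is correct, and it follows the same basic route as the paper's own proof: realise $|D|$ as the inverse limit $\varprojlim (D/P_n)$, pick the extremal strong component of each $D-X_n$ in the condensation order, observe that these choices are nested and hence define an end, and then compare every other end against it. There are, however, two places where you are more careful than the paper, and one of them is substantive. The paper takes $C_n$ to be the greatest strong component of $D-X_n$ among \emph{all} strong components and asserts without further argument that $C_{n+1}\subseteq C_n$; taken literally this can fail, because the greatest strong component may be finite. (For instance, if $D$ has a vertex $b$ receiving an edge from every other vertex, then $\{b\}$ is the greatest strong component of $D-X_n$ whenever $b\notin X_n$, and as soon as $b$ enters $X_{n+1}$ the inclusion $C_{n+1}\subseteq C_n$ breaks down.) Your restriction to the greatest \emph{infinite} strong component $L_n$, together with your explicit monotonicity argument (an infinite component $A\succ C'$ of $D-X_{n-1}$ would contain an infinite component $B$ of $D-X_n$ with $L_n\prec B$), repairs exactly this point; note that to produce $B$ you do not even need to pass through ends, since solidity alone guarantees an infinite strong component of $A-X_n$. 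The second difference is that you prove $\eta\leq_\Omega\omega^*$ directly, by exhibiting infinitely many disjoint single-edge paths from a representative of $\eta$ to one of $\omega^*$, whereas the paper shows only that $\omega^*\not\leq_\Omega\eta$ (every family of disjoint paths in that direction must pass through the finite separator) and then invokes totality of $\leq_\Omega$ on the ends of a tournament. Your version is self-contained in this respect and, in view of the repaired compatibility step, is the more watertight write-up of the same argument.
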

\begin{proof}

First note that $\Omega(D)$ is non-empty; it is straightforward to construct a ray in $D$ since the deletion of any finite vertex set leaves only finitely many strong components and every ray in a solid digraph is solid. We show that $\Omega(D)$ has a greatest element; the proof for the least element is analogue.

Fix an enumeration of $V(D)$ and let $X_n$ denote the set of the first $n$ vertices. We have that $|D|~\cong~\varprojlim(D/ P_{n})_{n \in \N } $ by Corollary~\ref{cor: modD inverse limit of deletion minors}. Now, consider the strong components of $D-X_n$. We may view them as partially ordered by $C_1\le C_2$ if there is a path in $D-X_n$ from $C_1$ to $C_2$. As $D$ is a tournament, this gives a total order on the strong components of $D - X_n$. Hence, for every $X_n$ there is a greatest strong component $C_n$ of $D-X_n$ with regard to the aforementioned order of strong components. This strong component is a vertex in $D / P_{n}$, and this choice of vertices is compatible in the sense that $C_n$ includes $C_{n+1}$ as a subset. So this choice of vertices  gives a point in the inverse limit, which in turn corresponds to a point $\omega$ in $|D| $. It is straightforward to show that this point $\omega$ is an end of $D$ and we claim that it is the greatest element of $\Omega(D)$. Indeed, for any other end $\eta \in \Omega(D)$ there is a $X_n$ that separates $\omega$ and $\eta$. Now, $C( X_n, \omega )$ and $C( X_n, \eta )$ are distinct strong components of $D-X_n$, and by the choice of $\omega$ we have that $C( X_n, \omega )$  is greater than $C( X_n, \eta )$. Consequently, there are only finitely many disjoint paths from a ray representing $\omega$ to a ray representing $\eta$.
\end{proof}

\begin{mainresult}\label{mainthm: Camion}
Every countable strongly connected solid tournament has a  Hamilton  circle. 
\end{mainresult}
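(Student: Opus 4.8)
The plan is to assemble the Hamilton circle from two injective topological arcs that meet exactly at the two extremal ends of $D$. By Lemma~\ref{lemma: end space tournament greates element}, $\Omega(D)$ has a greatest element $\omega^*$ and a least element $\omega_*$ for $\leq_\Omega$. I would build one arc running ``along infinity'' from $\omega^*$ to $\omega_*$ that contains only few vertices, and a second, complementary arc running from $\omega_*$ back up to $\omega^*$ that sweeps up every remaining vertex together with every remaining end. Concatenating the two produces a closed topological path that is injective apart from $\alpha(0)=\alpha(1)=\omega^*$ and meets $\omega_*$ once in the middle; it traverses every vertex, and hence (by the remark in the introduction) every end, exactly once, so it is a Hamilton circle.

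For the first arc I would construct a directed double ray $W=\ldots,w_{-1},w_0,w_1,\ldots$ whose reverse tail $W_{<0}$ represents $\omega^*$ and whose forward tail $W_{0<}$ represents $\omega_*$. Such a ray exists: using that each $C(X,\omega^*)$ and each $C(X,\omega_*)$ is strongly connected, one routes a reverse ray into $\omega^*$ and a forward ray out of $\omega_*$, and joins their finite ends by a directed path, which exists because $D$ is strongly connected; disjointness and, if needed, the merging of these three pieces are handled with Lemma~\ref{lem: insertin}. A parameterisation of $W$ then gives an injective topological path from $\omega^*$ to $\omega_*$, precisely because the reverse tail represents $\omega^*$ and the forward tail represents $\omega_*$.

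For the second arc I would delete $V(W)$ and treat $D-W$ component by component. Its strong components are linearly ordered by reachability (as $D$ is a tournament), increasing from the $\leq_\Omega$-bottom towards $\omega^*$; each is a strongly connected tournament and so, by Theorem~\ref{mainthm: Redei}, carries a Hamilton path. I would splice these together: between consecutive components one uses a cross-edge (present since $D$ is a tournament), one enters the bottom component from $\omega_*$ by a limit edge (available because $\omega_*$ is $\leq_\Omega$-least, so $C(X,\omega_*)$ sends an edge to every outside vertex), and one leaves the top component into $\omega^*$ by a limit edge (available because $\omega^*$ is greatest, so every vertex and end sends an edge into $C(X,\omega^*)$). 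The result is a topological path from $\omega_*$ to $\omega^*$ covering exactly $V(D)\setminus V(W)$ and all ends other than $\omega^*,\omega_*$.

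The main obstacle is this second arc. Deleting the infinite set $V(W)$ may leave infinitely many strong components whose reachability order is topologically complicated, and I must ensure that (a) each component is again a \emph{solid} tournament, so that Theorem~\ref{mainthm: Redei} applies, and (b) every end of $D$ other than $\omega^*$ and $\omega_*$ is represented inside exactly one of these components, so that the spliced arc visits it once. Most delicate is continuity of the spliced arc at the ends: I expect to verify this, and simultaneously to organise the possibly infinitely many components, by an inverse-limit argument along a sequence of finite down-closed vertex sets exhausting $V(D)$, exactly as in the proof of Theorem~\ref{mainthm: Redei} via Corollary~\ref{cor: modD inverse limit of deletion minors}. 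The extremal choice of $\omega^*$ and $\omega_*$ is what forces the limit edges at the two seams to point the right way and keeps the limiting path injective at the ends.
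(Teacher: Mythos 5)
Your overall strategy coincides with the paper's main case: a double ray $W$ running from $\omega^*$ to $\omega_*$, plus Hamilton paths of the strong components of $D-W$ (via Theorem~\ref{mainthm: Redei}) spliced by edges and limit edges into an arc from $\omega_*$ back to $\omega^*$. But the real work lies in \emph{which} double ray $W$ you take, and your construction omits the two properties that make the rest of the plan possible. First, $W$ must be chosen so that its \emph{finite} subpaths eventually separate every vertex, and every end other than $\omega_*,\omega^*$, from both $\omega_*$ and $\omega^*$. A double ray obtained merely from strong connectivity and Lemma~\ref{lem: insertin} need not have this property: a vertex $v\notin W$ may send edges to, and receive edges from, infinitely many vertices of the forward tail of $W$, in which case $v\in C(W_n,\omega_*)$ for every $n$; its strong component in $D-W$ is then not a strong component of any $D-W_n$, so there is no reason for it to be solid, and your appeal to Theorem~\ref{mainthm: Redei} inside the components collapses (likewise your requirement (b) on the distribution of ends). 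This is not something the later inverse-limit bookkeeping can repair; it has to be built into $W$. The paper does so by taking $W$ along the ray $R_{\omega_*}$ of a normal spanning arborescence of $D$ and the reverse of the corresponding ray in a normal spanning arborescence of the reverse digraph (Lemmas~\ref{lem: NST endfaith} and~\ref{lemma: separation props of NSA}), and then inserting the finitely many stray vertices so that the separation property survives.

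Second---and this is the step your proposal misses entirely---even such a $W$ is not yet good enough: it must additionally be enlarged to contain the set $S_*$ of all vertices sending a limit edge to $\omega_*$ and the set $S^*$ of all vertices receiving a limit edge from $\omega^*$. If some $v\in S_*$ were left outside $W$, nothing prevents $v$ from sending edges into components of $D-W$ lying inside $C(W_n,\omega_*)$ for arbitrarily large $n$; then $v$'s component precedes, in the reachability order, infinitely many components converging to $\omega_*$, so your spliced arc (which traverses components in increasing order starting at $\omega_*$) would visit $v$ at a positive time and afterwards accumulate at $\omega_*$ again---continuity would force it to return to $\omega_*$, destroying injectivity. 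The same insertion is what legitimises your seam at $\omega_*$: for $u$ in the least component of $D-W$, the fact that $u\notin S_*$ yields a finite $X$ all of whose edges between $C(X,\omega_*)$ and $u$ point towards $u$, which is exactly what makes $(\omega_*,u)$ a limit edge. Your stated reason (``$C(X,\omega_*)$ sends an edge to every outside vertex because $\omega_*$ is $\leq_\Omega$-least'') is false in general: a vertex may beat all but finitely many vertices of $D$ and so receive no edge from any $C(X,\omega_*)$. Finally, your two-arc scheme tacitly assumes $\omega_*\neq\omega^*$; if $D$ has exactly one end (which happens for strongly connected solid tournaments), the concatenation would pass through that end twice and is not a circle, so this case needs the separate argument the paper gives (a spanning ray or spanning double ray built with Lemma~\ref{lem: insertin}, closed up by a single limit edge).
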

\begin{proof}

First note that for every vertex $v \in V(D)$ and any end $\omega \in \Omega(D)$ the tournament $D$ has a limit edge from $v$ to $\omega$ or vice versa. Indeed, for a ray $R$ that represents $\omega$ the vertex $v$ sends infinitely many edges to $V(R)$ or receives infinitely many edges from $V(R)$. Furthermore, for a vertex $v \in V(D)$ there is a limit edge from some end of $D$ to $v$ if and only if $v$ has infinite in-degree, and  there is a limit edge from $v$ to some end of $D$  if and only if $v$ has infinite  out-degree.  We split the proof into two main cases:\newline

\noindent \textbf{First case:} The tournament $D$ has only one end $\omega$. In this case, first suppose that every vertex of $D$ has infinite  in- and out-degree. Then it is straightforward to construct a spanning ray $R$. The first vertex $v$ of $R$ receives a limit edge from $\omega$, and following first $R$ to $\omega$ and then the limit edge $(\omega,v)$ yields the desired Hamilton circle in $|D|$. We remark that in this situation it is also possible to construct a spanning reverse ray or a spanning double ray to obtain a Hamilton circle in $|D|$. 

Second, suppose that there is a vertex, $v$ say, of $D$ that has finite in- or finite out-degree. We discuss the case where $v$ has finite in-degree, the other case follows by considering the reverse of $D$. Fix a normal spanning arborescence $T$ of $D$ rooted at $v$, Lemma~\ref{lem: countabledigraphs have NSA}, and apply Theorem~\ref{mainthm: Redei} to $D$ and $T$ to obtain a Hamilton path $\alpha$ in $|D|$ that starts at $v$. As $D$ is solid and one-ended, $T$ has exactly one ray $R_\omega$ that starts at $v$, Lemma~\ref{lem: NST endfaith}. All vertices of $R_\omega$ are traversed by $\alpha$ before $\omega$, in particular the vertices that are traversed by $\alpha$ before $\omega$ form a ray $R$, in the order in which they are traversed by $\alpha$. Conversely, the vertices that are traversed by $\alpha$ after $\omega$ form a reverse path or a reverse ray $\Rv$. Furthermore, every vertex $v'$ of $\Rv$ has finite  out-degree as there are only finitely many vertices greater than $v'$ in the normal order. If $\Rv$ is a reverse path we are done by Lemma~\ref{lem: insertin}, as we can insert all the vertices of $\Rv$ into $R_\omega$ one after another to obtain a spanning ray and then use a limit edge from $\omega$ to its start vertex to obtain the desired Hamilton circle in $|D|$. If $\Rv$ is a reverse ray there is an edge from some vertex of $\Rv$ to $v$, since $v$ has finite out-degree. Consequently, we find a double ray $W$ that contains $R$ and all but finitely many vertices of $\Rv$. By Lemma~\ref{lem: insertin}, any vertex of $\Rv$ not yet contained in $W$ can be inserted into $W$ to obtain a spanning double ray of $D$. As $D$ has only one end, a spanning double ray naturally defines a Hamilton circle in $|D|$.\newline

\noindent \textbf{Second case:} The tournament $D$ has more than one end. For the rest of the proof denote by $\omega_*$ the least and by $\omega^*$ the greatest end of $D$, Lemma~\ref{lemma: end space tournament greates element}. Our first goal is to find a double ray $W$ such that its finite subpaths separate every vertex and every other end from $\omega_*$ and $\omega^*$ respectively, and such that its subrays represent $\omega_*$ and its reverse subrays represent $\omega^*$. In order to find such a double ray, fix a normal spanning arborescence $T$ of $D$. Then there is exactly one ray $R_{\omega_*}$ in $T$ that starts at the root of $T$ and represents $\omega_*$ in $D$, Lemma~\ref{lem: NST endfaith}. This ray $R_{\omega_*}$ has the property that its finite initial segments separate $\omega_*$ from every vertex and every other end eventually, Lemma~\ref{lemma: separation props of NSA}. Now, consider $\Dv$ the reverse of $D$ and fix a normal spanning arborescence $\Tv$ of $\Dv$. The ends of $D$ and $\Dv$ are in a one-to-one correspondence in that the reverse of every ray in $D$ represents an end in $\Dv$.  Let $R_{\omega^*}$ be the unique ray in $\Tv$ that starts at the root and represents the least end of $\Dv$, Lemma~\ref{lem: NST endfaith}. Then the reverse $\Rv_{\omega^*}$ of $R_{\omega^*}$  is a reverse ray in $D$ that represents $\omega^*$ and its finite initial segments separate $\omega^*$ from every vertex and every other end eventually, Lemma~\ref{lemma: separation props of NSA}. As $\omega_* \neq \omega^*$, we have that $R_{\omega_*}$ and $\Rv_{\omega^*}$ have only finitely many vertices in common. Consequently, there is a double ray $W'$ that contains a tail of $R_{\omega_*}$ and a tail of $\Rv_{\omega^*}$. Let $S$ be the set of all the vertices of $R_{\omega_*}$ and $\Rv_{\omega^*}$ not contained in $W'$. If we can insert all those vertices of $S$ into $W'$ that are not separated from $\omega_{*}$ or $\omega^*$ by any finite subpath of $W'$ then we obtain our desired double ray. So let $s \in S$ be a vertex that is not separated from $\omega_{*}$ or $\omega^*$ by any finite subpath of $W'$. If $s$ can be separated from $\omega_*$ but not from $\omega^*$  by a finite subpath of $W'$, or vice versa, then it is straightforward to insert $s$ into $W'$, Lemma~\ref{lem: insertin}. So we may assume that $s$ cannot be separated from $\omega_*$ and from $\omega^*$ by  any finite subpath of $W'$. Furthermore, we may assume that for $W'=\ldots,w_{-1},w_{0},w_1,\ldots $ we have that $s$ receives an edge from $w_{n}$ for $0\leq n$ and sends an edge to all the other vertices of $W'$, otherwise $s$ can clearly be inserted into $W'$. As $\omega_* <_\Omega \omega^*$, there is a finite subpath $W_N= w_{-N}, \ldots, w_N$ of $W'$, for $2 \leq N$, such that there is no edge from $W_{<N}$ to $W_{N<}$. Now, there is a non-trivial path $P$ in $C(W_N,\omega_*)= C(W_N,\omega^*)$ from $W_{<N}$  to $W_{N<}$. If $P$ contains $s$ it can be inserted into  $W_{N<}$ via a subpath of $P$ (and into $W_{N<}$). If $s$ is not contained in $P$ consider any inner vertex $v$ of $P$, then $s$ can be inserted into  $W_{N<}$ or $W_{<N}$ depending on whether $D$ contains the edge $(s,v)$ or the edge $(v,s)$. Hence, we obtain a double ray $W$ such that its finite subpaths separate every vertex and every other end from $\omega_*$ and $\omega^*$ respectively and such that its subrays represent $\omega_*$ and its reverse subrays represent $\omega^*$.

Our next goal is to show that there is even a double ray with the defining properties of $W$ that contains all vertices which send a limit edge to $\omega_*$ and all vertices that receive a limit edge from $\omega^*$. We will show this claim by inserting all these vertices, not yet contained in $W$,  one after the other into $W$ in such a way that the limit is still a double ray. 

Denote by $S_*$ all vertices not in $W$ which send a limit edge to $\omega_*$, and by $S^*$ all vertices not in $W$ which receive a limit edge from $\omega^*$. Note that $S_*$ and $S^*$ are disjoint, as there is a finite subpath of $W$ that separates $\omega_*$ and $\omega^*$. First consider $S_*$ and choose $N \in \N$ so that $W_N$ separates $\omega_*$ and $\omega^*$. It is straightforward to check that all vertices in $S_*$ that are separated from $\omega_*$ by $W_N$ can be inserted into $W$ without changing $W_{N<}$ or $W_{<N}$. Now, for any other vertex $ s \in S_*$ there is a smallest $ n(x) \in \N $ such that $W_{n(s)}$ separates $s$ from $\omega_*$. Again, it is straightforward to check that all vertices in $S_*$ with index $n(s)$ can be inserted into $W$ by a path from $w_{n(s)}$ to $w_{n(s)+1}$. An analogue technique shows that all vertices in $S^*$ can be inserted into $W$. As we substituted only edges of $W$ by a path at most once, we end up in the limit step with a double ray.

So let us assume that $W$ additionally has the property to contain  $S_* \cup S^*$. Our final goal is to find an injective topological path $\alpha$ from $\omega_*$ to $\omega^*$ that contains precisely the vertices not in $W$. Having $\alpha$ at hand, the desired Hamilton circle in $|D|$ is obtained by first following $\alpha$ and then following $W$.

Consider the strong components of $D- W$, for any such strong component there is a finite subpath $W_n$ of $W$ such that $C$ is a strong component of $D-W_n$. Indeed, for every $v\in C$ there is an $n(v) \in \N$ such that $W_{n(v)}$ separates $v$ from $\omega_*$ and $\omega^*$ and this $n(v)$ has to be the same for any two vertices in $C$. Moreover, these strong components are totally ordered in that every vertex of $C$ sends an edge to any vertex of $C'$, or vice versa, for any two strong components of $D-W$. For all strong components of $D-W$ fix a Hamilton path $\alpha_C$ in $|C|$ (or in $C$ if it is finite). Now, all these Hamilton paths can be linked up to the desired injective topological path $\alpha$, see Figure~\ref{fig:secproof}. Indeed, if $C$ is the predecessor of $C'$ in the aforementioned order of strong components of $D-W$, then there is an edge or  limit edge from the endpoint of $\alpha_C$ to the starting point of $\alpha_{C'}$. Moreover, if there is a least element, $C_*$ say, of the strong components of $D-W$, then $\omega_*$ sends a limit edge to every vertex of $C_*$. Similarly, if there is a greatest element, $C^*$ say, then every vertex of $C^*$ sends a limit edge to $\omega^*$. 

Conversely, if there is no greatest element, then the strong components of $D-X$ converge to $\omega^*$ in that traversing the $\alpha_C$ one after the other in their total order yields an injective continuous path that ends at $\omega^*$. Similarly,  if there is no least element, then the strong components of $D-X$ converge to $\omega_*$ in that traversing the $\alpha_C$ one after the other in their inverted total order yields an injective continuous path that starts at $\omega_*$. Note that the open sets $\hat{C}_\varepsilon(W_n,\omega_*)$ and $\hat{C}_\varepsilon(W_n,\omega^*)$ form a neighbourhood base for $\omega_*$ and $\omega^*$, respectively. This topological path traverses all the vertices of $D-W$ as $W$ contains $S_* \cup S^*$. We remark that if there are no strong components of $D-W$, i.e. $W$ is spanning then we obtain a directed topological path from $\omega_*$ to $\omega^*$ that avoids $W$ by the limit edge $(\omega_*,\omega^*)$.
\end{proof}

\begin{figure}[ht]
    \centering
    \begin{tikzpicture}[scale=.8]
    \begin{scope}[xscale=-1,yscale=1]
    \tikzset{edge/.style = {->,> = latex'}}
    
    \draw[fill,black] (0,0) circle (.05);
    
    \draw[fill,black] (-2,0.3) circle (.05);
    \draw[fill,black] (2,0.3) circle (.05);
    
    \draw[fill,black] (-3.5,1) circle (.05);
    \draw[fill,black] (3.5,1) circle (.05);
    
    \draw[fill,black] (-4.5,2) circle (.05);
    \draw[fill,black] (4.5,2) circle (.05);
    


 \draw[edge, thick] (2,0.3) to[in=0, out=-165] (0,0);
 \draw[edge, thick] (0,0) to[in=-15, out=180] (-2,0.3);

 \draw[edge, thick] (3.5,1) to[in=15, out=-145] (2,0.3);
  \draw[edge, thick] (-2,0.3) to[out=-15, in=-35] (-3.5,1);
  
   \draw[edge, thick] (4.5,2) to[in=38, out=-130] (3.5,1);
   
    \draw[edge, thick] (-3.5,1)  to[in=-50, out=142] (-4.5,2);

\draw[edge, thick]  (5,3.3) to[out=-90, in=50](4.5,2);
\draw[edge, thick]  (-4.5,2) to[in=-90, out=130] (-5,3.3);
 
    \draw[fill,black] (-5,4) circle (.05);
     \draw[fill,black] (5,4) circle (.05);
     
    \node at (-5,3.7) {$\vdots$};
     \node at (5,3.7) {$\vdots$};





\begin{scope}[shift={(-3.5,0.5)}]
\draw (-1.8,3.5) to[out=-90, in=180] (0,0);
\draw (1.8,3.5) to[out=-90, in=0] (0,0);
\end{scope}

\begin{scope}[shift={(3.5,0.5)}]
\draw (-1.8,3.5) to[out=-90, in=180] (0,0);
\draw (1.8,3.5) to[out=-90, in=0] (0,0);
\end{scope}

\begin{scope}[shift={(-4.2,1.6)}]
\draw (-1,2.5) to[out=-90, in=180] (0,0);
\draw (1,2.5) to[out=-90, in=0] (0,0);
\end{scope}

\begin{scope}[shift={(4.2,1.6)}]
\draw (-1,2.5) to[out=-90, in=180] (0,0);
\draw (1,2.5) to[out=-90, in=0] (0,0);
\end{scope}


\draw (-1,1.5) circle (.5);
\draw (1,1.5) circle (.5);

\draw (-2.5,2.5) circle (.4);
\draw (2.5,2.5) circle (.4);

\draw (-3.8,3.5) circle (.3);
\draw (3.8,3.5) circle (.3);

 \draw[edge, thick, dashed] (-3.8,3.2) to[out=-90,in=180] (-2.9,2.5);
 
 \draw[edge, thick, dashed] (2.9,2.5) to[out=0, in=-90] (3.8,3.2);
 
 \draw[edge, thick, dashed] (-2.1,2.5) to[bend left] (-1,2);
 \draw[edge, thick, dashed] (1,2) to[out=70,in=180] (2.1,2.5);
 
 \draw[edge, thick, dashed] (-1,1) to[out=-45,in=-135]  (1,1);

    
\node at (-5+0.35,4.2) {$\omega_*$};
\node at (5-0.4,4.2) {$\omega^*$};

\node at (0,-0.3) {$w_0$};
\node at (-2,0) {$w_1$};
\node at (2.2,0) {$w_{-1}$};

\node at (5.3,0.5) {\footnotesize{$C(W_1,\omega^*)$}};

\node at (-1.7,1.1) {$C$};
\node at (1.7,1.1) {$C'$};

\end{scope}

\end{tikzpicture}
\caption{Strong components of $D-W$ are indicated as circles. Strong components of the form $C(W_n,\omega_*)$ or $C(W_n,\omega^*)$ are indicated as parabolas, which might contain strong components of $D-W$ not yet separated by $W_n$. }\label{fig:secproof}
\end{figure}
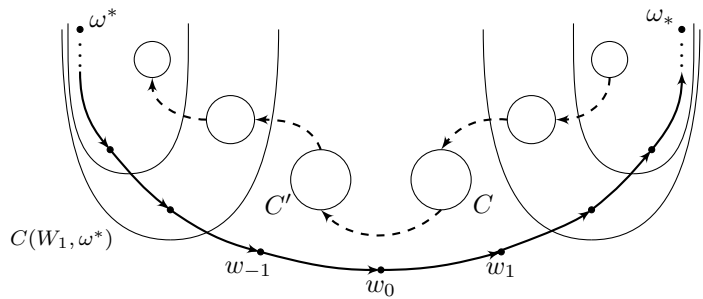

\bibliographystyle{plain}
\bibliography{reference}

\end{document}